\newwrite\@unused
\def\typeout#1{{\let\protect\string\immediate\write\@unused{#1}}}
\def\figurepath{./}
\def\@nnil{\@nil}
\def\@empty{}
\def\@psdonoop#1\@@#2#3{}
\def\@psdo#1:=#2\do#3{\edef\@psdotmp{#2}\ifx\@psdotmp\@empty \else
    \expandafter\@psdoloop#2,\@nil,\@nil\@@#1{#3}\fi}
\def\@psdoloop#1,#2,#3\@@#4#5{\def#4{#1}\ifx #4\@nnil \else
       #5\def#4{#2}\ifx #4\@nnil \else#5\@ipsdoloop #3\@@#4{#5}\fi\fi}
\def\@ipsdoloop#1,#2\@@#3#4{\def#3{#1}\ifx #3\@nnil 
       \let\@nextwhile=\@psdonoop \else
      #4\relax\let\@nextwhile=\@ipsdoloop\fi\@nextwhile#2\@@#3{#4}}
\def\@tpsdo#1:=#2\do#3{\xdef\@psdotmp{#2}\ifx\@psdotmp\@empty \else
    \@tpsdoloop#2\@nil\@nil\@@#1{#3}\fi}
\def\@tpsdoloop#1#2\@@#3#4{\def#3{#1}\ifx #3\@nnil 
       \let\@nextwhile=\@psdonoop \else
      #4\relax\let\@nextwhile=\@tpsdoloop\fi\@nextwhile#2\@@#3{#4}}
\def\psdraft{
        \def\@psdraft{0}
}
\def\psfull{
        \def\@psdraft{100}
}
\newif\if@prologfile
\newif\if@postlogfile
\newif\if@noisy
\def\pssilent{
        \@noisyfalse
}
\def\psnoisy{
        \@noisytrue
}
\newif\if@bbllx
\newif\if@bblly
\newif\if@bburx
\newif\if@bbury
\newif\if@height
\newif\if@width
\newif\if@rheight
\newif\if@rwidth
\newif\if@clip
\newif\if@verbose
\def\@p@@sclip#1{\@cliptrue}
\def\@p@@sfile#1{\def\@p@sfile{null}%
                \openin1=#1
                \ifeof1\closein1%
                       \openin1=\figurepath#1
                        \ifeof1\typeout{Error, File #1 not found}
                        \else\closein1
                            \edef\@p@sfile{\figurepath#1}%
                        \fi%
                 \else\closein1%
                       \def\@p@sfile{#1}%
                 \fi}
\def\@p@@sfigure#1{\def\@p@sfile{null}%
                \openin1=#1
                \ifeof1\closein1%
                       \openin1=\figurepath#1
                        \ifeof1\typeout{Error, File #1 not found}
                        \else\closein1
                            \def\@p@sfile{\figurepath#1}%
                        \fi%
                 \else\closein1%
                       \def\@p@sfile{#1}%
                 \fi}
\def\@p@@sbbllx#1{
                \@bbllxtrue
                \dimen100=#1
                \edef\@p@sbbllx{\number\dimen100}
}
\def\@p@@sbblly#1{
                \@bbllytrue
                \dimen100=#1
                \edef\@p@sbblly{\number\dimen100}
}
\def\@p@@sbburx#1{
                \@bburxtrue
                \dimen100=#1
                \edef\@p@sbburx{\number\dimen100}
}
\def\@p@@sbbury#1{
                \@bburytrue
                \dimen100=#1
                \edef\@p@sbbury{\number\dimen100}
}
\def\@p@@sheight#1{
                \@heighttrue
                \dimen100=#1
                \edef\@p@sheight{\number\dimen100}
}
\def\@p@@swidth#1{
                \@widthtrue
                \dimen100=#1
                \edef\@p@swidth{\number\dimen100}
}
\def\@p@@srheight#1{
                \@rheighttrue
                \dimen100=#1
                \edef\@p@srheight{\number\dimen100}
}
\def\@p@@srwidth#1{
                \@rwidthtrue
                \dimen100=#1
                \edef\@p@srwidth{\number\dimen100}
}
\def\@p@@ssilent#1{ 
                \@verbosefalse
}
\def\@p@@sprolog#1{\@prologfiletrue\def\@prologfileval{#1}}
\def\@p@@spostlog#1{\@postlogfiletrue\def\@postlogfileval{#1}}
\def\@cs@name#1{\csname #1\endcsname}
\def\@setparms#1=#2,{\@cs@name{@p@@s#1}{#2}}
\def\ps@init@parms{
                \@bbllxfalse \@bbllyfalse
                \@bburxfalse \@bburyfalse
                \@heightfalse \@widthfalse
                \@rheightfalse \@rwidthfalse
                \def\@p@sbbllx{}\def\@p@sbblly{}
                \def\@p@sbburx{}\def\@p@sbbury{}
                \def\@p@sheight{}\def\@p@swidth{}
                \def\@p@srheight{}\def\@p@srwidth{}
                \def\@p@sfile{}
                \def\@p@scost{10}
                \def\@sc{}
                \@prologfilefalse
                \@postlogfilefalse
                \@clipfalse
                \if@noisy
                        \@verbosetrue
                \else
                        \@verbosefalse
                \fi
}
\def\parse@ps@parms#1{
                \@psdo\@psfiga:=#1\do
                   {\expandafter\@setparms\@psfiga,}}
\newif\ifno@bb
\newif\ifnot@eof
\newread\ps@stream
\def\bb@missing{
        \if@verbose{
                \typeout{psfig: searching \@p@sfile \space  for bounding box}
        }\fi
        \openin\ps@stream=\@p@sfile
        \no@bbtrue
        \not@eoftrue
        \catcode`\%=12
        \loop
                \read\ps@stream to \line@in
                \global\toks200=\expandafter{\line@in}
                \ifeof\ps@stream \not@eoffalse \fi
                \@bbtest{\toks200}
                \if@bbmatch\not@eoffalse\expandafter\bb@cull\the\toks200\fi
        \ifnot@eof \repeat
        \catcode`\%=14
}       
\newif\if@bbmatch
\def\@bbtest#1{\expandafter\@a@\the#1
\long\def\@a@#1
\long\def\bb@cull#1 #2 #3 #4 #5 {
        \dimen100=#2 bp\edef\@p@sbbllx{\number\dimen100}
        \dimen100=#3 bp\edef\@p@sbblly{\number\dimen100}
        \dimen100=#4 bp\edef\@p@sbburx{\number\dimen100}
        \dimen100=#5 bp\edef\@p@sbbury{\number\dimen100}
        \no@bbfalse
}
\catcode`\%=14
\def\compute@bb{
                \no@bbfalse
                \if@bbllx \else \no@bbtrue \fi
                \if@bblly \else \no@bbtrue \fi
                \if@bburx \else \no@bbtrue \fi
                \if@bbury \else \no@bbtrue \fi
                \ifno@bb \bb@missing \fi
                \ifno@bb \typeout{FATAL ERROR: no bb supplied or found}
                        \no-bb-error
                \fi
                \count203=\@p@sbburx
                \count204=\@p@sbbury
                \advance\count203 by -\@p@sbbllx
                \advance\count204 by -\@p@sbblly
                \edef\@bbw{\number\count203}
                \edef\@bbh{\number\count204}
}
%
%
\def\in@hundreds#1#2#3{\count240=#2 \count241=#3
                     \count100=\count240        
                     \divide\count100 by \count241
                     \count101=\count100
                     \multiply\count101 by \count241
                     \advance\count240 by -\count101
                     \multiply\count240 by 10
                     \count101=\count240        
                     \divide\count101 by \count241
                     \count102=\count101
                     \multiply\count102 by \count241
                     \advance\count240 by -\count102
                     \multiply\count240 by 10
                     \count102=\count240        
                     \divide\count102 by \count241
                     \count200=#1\count205=0
                     \count201=\count200
                        \multiply\count201 by \count100
                        \advance\count205 by \count201
                     \count201=\count200
                        \divide\count201 by 10
                        \multiply\count201 by \count101
                        \advance\count205 by \count201
                     \count201=\count200
                        \divide\count201 by 100
                        \multiply\count201 by \count102
                        \advance\count205 by \count201
                     \edef\@result{\number\count205}
}
\def\compute@wfromh{
                \in@hundreds{\@p@sheight}{\@bbw}{\@bbh}
                \edef\@p@swidth{\@result}
}
\def\compute@hfromw{
                \in@hundreds{\@p@swidth}{\@bbh}{\@bbw}
                \edef\@p@sheight{\@result}
}
\def\compute@handw{
                \if@height 
                        \if@width
                        \else
                                \compute@wfromh
                        \fi
                \else 
                        \if@width
                                \compute@hfromw
                        \else
                                \edef\@p@sheight{\@bbh}
                                \edef\@p@swidth{\@bbw}
                        \fi
                \fi
}
\def\compute@resv{
                \if@rheight \else \edef\@p@srheight{\@p@sheight} \fi
                \if@rwidth \else \edef\@p@srwidth{\@p@swidth} \fi
}
%
\def\compute@sizes{
        \compute@bb
        \compute@handw
        \compute@resv
}
%
%
\def\psfig#1{\vbox {
        %
        \ps@init@parms
        \parse@ps@parms{#1}
        \compute@sizes
        \ifnum\@p@scost<\@psdraft{
                \if@verbose{
                        \typeout{psfig: including \@p@sfile \space }
                }\fi
                \special{ps::[begin]    \@p@swidth \space \@p@sheight \space
                                \@p@sbbllx \space \@p@sbblly \space
                                \@p@sbburx \space \@p@sbbury \space
                                startTexFig \space }
                \if@clip{
                        \if@verbose{
                                \typeout{(clip)}
                        }\fi
                        \special{ps:: doclip \space }
                }\fi
                \if@prologfile
                    \special{ps: plotfile \@prologfileval \space } \fi
                \special{ps: plotfile \@p@sfile \space }
                \if@postlogfile
                    \special{ps: plotfile \@postlogfileval \space } \fi
                \special{ps::[end] endTexFig \space }
                \vbox to \@p@srheight true sp{
                        \hbox to \@p@srwidth true sp{
                                \hss
                        }
                \vss
                }
        }\else{
                \vbox to \@p@srheight true sp{
                \vss
                        \hbox to \@p@srwidth true sp{
                                \hss
                                \if@verbose{
                                        \@p@sfile
                                }\fi
                                \hss
                        }
                \vss
                }
        }\fi
}}
\def\psglobal{\typeout{psfig: PSGLOBAL is OBSOLETE; use psprint -m instead}}
\catcode`\@=12\relax

\usepackage{graphicx}
\usepackage{amssymb,mathrsfs}
\usepackage{subfigure}
\usepackage[usenames]{color}
\usepackage{enumerate}        

\newcommand {\R}{\mathbb{R}}
\newcommand {\Z}{\mathbb{Z}}
\newcommand {\bng}{B_n\Gamma}
\newcommand {\btg}{B_2\Gamma}
\newcommand {\ucng}{U\mathcal{C}^n\Gamma}
\newcommand {\ucn}[1]{U\mathcal{C}^n {#1}}
\newcommand {\dng}{\mathcal{D}^n\Gamma}
\newcommand {\udng}{U\mathcal{D}^n\Gamma}
\newcommand {\bn}[1]{B_n}
\newcommand {\ud}[2]{U\mathcal{D}^{#1} #2}
\newcommand {\st}{\hspace{-.4ex}*}
\newcommand {\e}{\textbf{e}}
\newcommand {\f}{\textbf{f}}

\newtheorem{theorem}{Theorem}[section]
\newtheorem{lemma}[theorem]{Lemma}

\newtheorem{assumption}[theorem]{Assumption}
\newtheorem{proposition}[theorem]{Proposition}

\newtheorem{conjecture}[theorem]{Conjecture}
\theoremstyle{definition}
\newtheorem{definition}[theorem]{Definition}
\newtheorem{example}[theorem]{Example}

\newtheorem{note}[theorem]{Note}

\sloppy

\begin{document}

\title[Presentations of graph braid groups]{Presentations of graph 
braid groups}
\author[D.Farley]{Daniel Farley}
      \address{Department of Mathematics \\
               Miami University\\
               Oxford, OH 45056\\
               http://www.users.muohio.edu/farleyds/}
      \email{farleyds@muohio.edu}
\author[L.Sabalka]{Lucas Sabalka}
      \address{Department of Mathematical Sciences\\
               Binghamton University\\
               Binghamton, NY  13902\\
               http://www.math.binghamton.edu/sabalka}
      \email{sabalka@math.binghamton.edu}

\begin{abstract}

Let $\Gamma$ be a graph.  The (unlabeled) configuration space $\ucng$ of $n$ points on $\Gamma$ is the space of $n$-element subsets of $\Gamma$. The $n$-strand braid group of $\Gamma$, denoted $\bng$, is the fundamental group of $\ucng$.

This paper extends the methods and results of \cite{FarleySabalka1}.   Here we compute presentations for $B_{n}\Gamma$, where $n$ is an arbitrary natural number and $\Gamma$ is an arbitrary finite connected graph.  Particular attention is paid to the case $n = 2$, and many examples are given.

\end{abstract}

\keywords{graph braid group, configuration space, discrete Morse theory}
                                                                                
\subjclass[2000]{Primary 20F65, 20F36; Secondary 57M15, 55R80}

\maketitle

\section{Introduction}\label{sec:intro}

Given a graph $\Gamma$, the \emph{unlabeled configuration space} $\ucng$ of $n$ points on $\Gamma$ is the space of $n$-element subsets of distinct points in $\Gamma$. The \emph{$n$-strand braid group of $\Gamma$}, denoted $\bng$, is the fundamental group of $\ucng$.

Graph braid groups are of interest because of their connections with classical braid groups and right-angled Artin groups \cite{CrispWiest,Sabalka,FarleySabalka2a}, and their connections with robotics and mechanical engineering.  Graph braid groups can, for instance, model the motions of robots moving about a factory floor \cite{Ghrist,Farber1,Farber2}, or the motions of microscopic balls of liquid on a nano-scale electronic circuit \cite{GhristPeterson}.

Various properties of graph braid groups have been established.  Ghrist showed in \cite{Ghrist} that the spaces $\ucng$ are $K( B_n \Gamma, 1)$s.  Abrams \cite{Abrams} showed that graph braid groups are fundamental groups of locally CAT(0) cubical complexes, and so for instance have solvable word and conjugacy problems \cite{BridsonHaefliger}.  Crisp and Wiest \cite{CrispWiest} showed that any graph braid group embeds in some right-angled Artin group, so graph braid groups are linear, bi-orderable, and residually finite.  For more information on what is known about graph braid groups, see for instance
\cite{Sabalka4}.

This paper continues a project begun in \cite{FarleySabalka1}.  In \cite{FarleySabalka1}, we used a discrete version of Morse theory (due to Forman \cite{Forman}) to simplify the configuration spaces $\ucng$ within their homotopy types. We were able to compute presentations for all braid groups $B_n T$, where $T$ is a tree; that is, for all \emph{tree braid groups} (\cite{FarleySabalka1}, Theorem 5.3).  Our methods also allowed us, in principle, to compute a Morse presentation for any graph braid group.  

Here we describe how to compute presentations for all graph braid groups $B_{n}\Gamma$, where $n$ is an arbitrary natural number and $\Gamma$ is a finite connected graph.  As in \cite{FarleySabalka1}, the generators in our presentations correspond to critical $1$-cells, and the relators
correspond to critical $2$-cells.  (In both cases, ``critical'' is meant in the sense of Morse theory.)  Theorem \ref{thm:biggie} describes the general form of a relator in $B_{n}\Gamma$ using ``costs", which are certain words in the group generators.  Propositions \ref{prop:vanishing} and \ref{prop:cost} describe exactly how to compute these costs.  Theorem \ref{thm:biggie} and Propositions \ref{prop:vanishing} and \ref{prop:cost} therefore completely describe presentations for all graph braid groups.  The resulting presentations are particularly simple when $n=2$.  Note \ref{note:important} describes a procedure for computing the relators in a 2-strand graph braid group, $B_{2}\Gamma$.      

The paper is organized as follows.  Section \ref{sec:background} contains a basic introduction to the version of discrete Morse theory that will be used in the rest of the paper.  Section \ref{sec:DGVF} describes a Morse matching on every `discretized' configuration space $\ud{n}{\Gamma}$.  Section \ref{sec:main} shows how to use the ideas of the previous sections in order to compute presentations of graph braid groups.  Section \ref{sec:morse2pres} collects some results about the case of two strands, and, in particular, contains the important Note \ref{note:important} 
(as described above).  Finally, Section \ref{sec:threeandmore} contains computations of presentations of $B_{n}\Gamma$, where $n=2$ or $3$ and $\Gamma$ is a balloon graph.
    
\section{Background on Discrete Morse Theory}\label{sec:background}

\subsection{A motivating example} \label{sec:motivatingexample}

\begin{example}

Consider the `star' tree $Y_k$, $k \geq 3$ -- the tree with exactly $1$ vertex of degree $k$ and $k$ vertices of degree $1$.  Then $B_2Y_k$ is a free group of rank ${{k-1}\choose{2}}$ (cf. \cite{Ghrist,ConnollyDoig}).  Ghrist described the entire configuration space $C_3 := U\mathcal{C}^2(Y_3)$ as a union of squares and triangles.  The configuration space $C_3$ is reproduced here on the left in Figure \ref{fig:B_2Y_k}.  The three (distorted) squares correspond to configurations in which the two strands occupy different branches of $Y_2$.  There are three squares since there are three ways of choosing two edges from a set of three.  The triangular flaps are copies of the configuration space $U\mathcal{C}^{2}(I)$, where $I$ is the unit interval.  These flaps correspond to configurations in which both strands occupy the same closed edge.

\begin{figure}[!h]
\begin{center}
\input{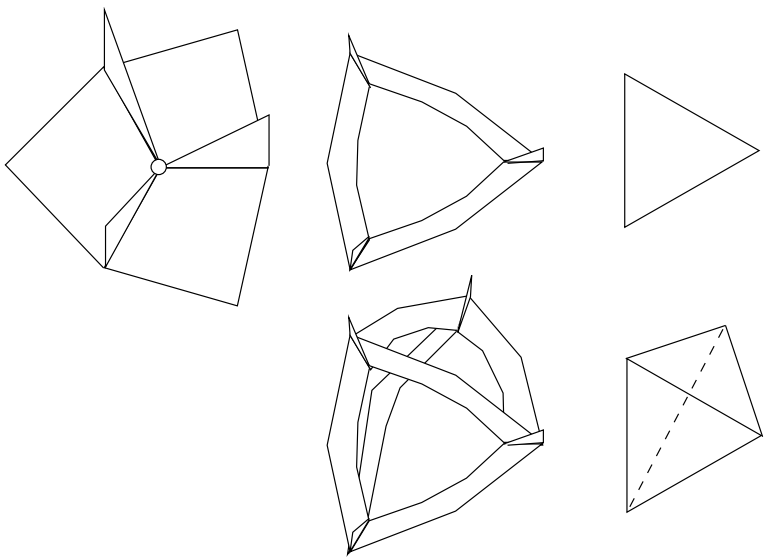}

\caption{On the left is the configuration space $C_3$ of $2$ strands on the tree $Y_n$.  The upper line shows that $C_3$ is homotopy equivalent to the $1$-skeleton of the $2$-simplex (i.e. a triangle).  The lower pictures show the space $C_4$ and the $1$-skeleton of the $3$-simplex, which are homotopy equivalent.}
\label{fig:B_2Y_k}
\end{center}
\end{figure}

Adjacent to our reproduction of Ghrist's $C_3$ is a space homeomorphic to $C_3$, where the central missing point has been expanded into a triangle.  We use this picture to show the configuration spaces $C_k := U\mathcal{C}^2(Y_k)$ where $k \geq 4$.

We begin with $k = 4$.  It is straightforward to see that we need to assemble the configuration space $C_4$ out of $4$ copies of $C_3$, for a total of ${4 \choose 2} = 6$ squares (the number of 
ways of choosing $2$ edges for the two strands) and $4$ triangles (the number of ways of choosing $1$ edge for both strands).  In $C_3$, the squares and triangles were joined together along a common edge, with cross-section a copy of $Y_3$.  In $C_4$, the squares and triangles will also come together along a common edge, but with $Y_4$ as cross-section.  It is also straightforward to see that the desired shape for $C_4$ should resemble the $1$-skeleton of the $3$-simplex.  Indeed, as $C_3$ deformation retracts onto the $1$-skeleton of the $2$-simplex (that is, a triangle), so $C_4$ deformation retracts onto the $1$-skeleton of the $3$-simplex.

These arguments generalize as follows.  We build the configuration space $C_k$ from the $1$-skeleton of a $(k-1)$-simplex.  For each vertex $v$ of the simplex, $C_k$ has a half-open edge (that is, one endpoint is present and one is not), labeled $c_v$.  For each $1$-cell $e$ of the simplex, $C_k$ has a square $c_e$ which is closed except that it is missing a single corner.  A square $c_e$ is attached to the edges $c_{\iota e}$ and $c_{\tau e}$ corresponding to the endpoints $\iota e$ and $\tau e$ of $e$, along \emph{adjacent} faces of $c_e$, so that the missing endpoints of $c_{\iota e}$ and $c_{\tau e}$ match the missing corner of $c_e$.  Finally, to each edge $c_v$ of $C_k$ there is attached a right triangle, which is closed except that it is missing a single corner adjacent to the hypotenuse.  The triangle is glued to $c_v$ along the side from the right angle to the missing corner, so that the missing corner lines up with the missing vertex of $c_v$.  

The deformation retraction onto the $1$-skeleton of the $k$-simplex is then clear, reinforcing the calculation that $B_2Y_k$ is a free group of rank ${{k-1}\choose{2}}$.

Even for very simple graphs like the graphs $Y_k$, the full configuration space can quickly contain much more information than is necessary to understand the behavior of the corresponding graph braid group.  As this example shows, there is a lot of extraneous information contained in the full configuration space, and the configuration spaces can quickly become difficult to visualize.  There are much smaller homotopy equivalent spaces, like the $1$-skeleta of simplices indicated here, which carry all of the homotopy data needed to compute presentations of the graph braid groups.  Indeed, even the $1$-skeleta of simplices, although $1$-dimensional instead of $2$-dimensional, are still too large and themselves have the homotopy types of bouquets of circles.  

\end{example}

As in this example, we wish to simplify configuration spaces of graphs down to smaller, more manageable spaces.  We use Forman's discrete Morse theory to formalize a method of simplifying these configuration spaces.  Discrete Morse theory deals with CW complexes, but as we will see in Section \ref{sec:DGVF}, this is not a problem for us.

\subsection{Discrete Morse theory:  definitions} \label{sec:morsetheory} In this subsection, we collect some basic definitions (including the definition of a Morse matching) from \cite{FarleySabalka1} (see also \cite{Brown} and \cite{Forman}, which were the original sources for these ideas).
                                                                                
Let $X$ be a finite regular CW complex. Let $K$ denote the set of open cells of $X$.  Let $K_{p}$ be the set of open $p$-cells of $X$.  For open cells $\sigma$ and $\tau$ in $X$, we write $\sigma < \tau$ if $\sigma \neq \tau$ and $\sigma \subseteq \overline{\tau}$, where $\overline{\tau}$ is the closure of $\tau$, and $\sigma \leq \tau$ if $\sigma < \tau$ or $\sigma = \tau$.

A \emph{partial function} from a set $A$ to a set $B$ is a function defined on a subset of $A$, and having $B$ as its target.  A \emph{discrete vector field} $W$ on $X$ is a sequence of partial functions $W_{i}: K_{i} \rightarrow K_{i+1}$ such that:
                                                                                
\begin{enumerate}
\item Each $W_{i}$ is injective;
\item if $W_{i} (\sigma ) = \tau$, then $\sigma < \tau$;
\item $\operatorname{image} \left( W_{i} \right) \cap \operatorname{domain} \left( W_{i+1} \right) =
\emptyset$.
\end{enumerate}

Let $W$ be a discrete vector field on $X$.  A \emph{$W$-path of dimension $p$} is a sequence of $p$-cells $\sigma_{0}, \sigma_{1}, \ldots, \sigma_{r}$ such that if $W( \sigma_{i} )$ is undefined, then $\sigma_{i+1} = \sigma_{i}$; otherwise $\sigma_{i+1} \neq \sigma_{i}$ and $\sigma_{i+1} < W( \sigma_{i})$.  The $W$-path is \emph{closed} if $\sigma_{r} = \sigma_{0}$, and \emph{non-stationary} if $\sigma_{1} \neq \sigma_{0}$.  A discrete vector field $W$ is a \emph{Morse matching} if $W$ has no non-stationary closed paths.

If $W$ is a Morse matching, then a cell $\sigma \in K$ is \emph{redundant} if it is in the domain of $W$, \emph{collapsible} if it is in the image of $W$, and \emph{critical} otherwise.  Note that any two of these categories are mutually exclusive by condition (3) in the definition of discrete vector field.

The ideas ``discrete Morse function" and ``Morse matching" are largely equivalent, in a sense that is made precise in \cite{Forman}, pg. 131.  In practice, we will always use Morse matchings instead of discrete Morse functions in this paper (as we also did in \cite{FarleySabalka1}). A Morse matching is sometimes referred to as a ``discrete gradient vector field'' in the literature; in particular, we used this more cumbersome terminology in \cite{FarleySabalka1}.

\subsection{Discrete Morse theory and fundamental group} \label{sec:fundgp} 

Section 2 of \cite{FarleySabalka1} described how to compute the fundamental group of a finite regular CW complex $X$ using a Morse matching $W$ defined on $X$.  This is done as follows.

Choose an orientation for $1$-cells in $X$, and let $A$ denote the set of all oriented $1$-cells and their inverses, treated as an alphabet. For every $1$-cell in $X$, we want to rewrite the $1$-cell in terms of $1$-cells that are critical under $W$.   Consider a word $w$ in $A^*$.  For any words $w_1$, $w_2$, and $w_3$ in $A^*$ and any single oriented $1$-cell $e \in A$, we define
the following moves:

\begin{enumerate}
\item (free cancellation) If $w = w_1ee^{-1}w_2$ or $w = w_1e^{-1}ew_2$, write $w \rightarrow w_1w_2$.

\item (collapsing) If $w = w_1ew_2$ and $e$ is a collapsible $1$-cell, write $w \rightarrow w_1w_2$.

\item (simple homotopy) If $w = w_1ew_2$ or $w_1e^{-1}w_2$, and $ew_3$ is a boundary word for a $2$-cell $c$ satisfying $W(e) = c$, write $w \rightarrow w_1w_3^{-1}w_2$ or $w \rightarrow w_1w_3w_2$, respectively.
\end{enumerate}
Let $\dot{\rightarrow}$ denote the reflexive transitive closure of $\rightarrow$, as in \cite{FarleySabalka1}.

Proposition 2.4 of \cite{FarleySabalka1} implies that any sequence
	$$w \rightarrow w_1 \rightarrow w_2 \rightarrow \dots$$
must terminate in a \emph{reduced} word, i.e.,  one that is not the source of any arrow.  Furthermore, this reduced word is uniquely determined by $w$, not the particular sequence of arrows.  We denote this reduced word $M^{\infty}(w)$.  The free cancellation property shows $M^{\infty}(w)$ is freely reduced, and the collapsing and simple homotopy properties guarantee $M^{\infty}(w)$ consists entirely of critical $1$-cells. 

\begin{theorem} \label{thm:group} \emph{(}\cite{FarleySabalka1}\emph{)}
Let $X$ be a finite regular connected CW complex and let $W$ be a Morse matching with only one critical $0$-cell.  Then:
	$$ \pi_{1}(X) \cong \langle \Sigma \mid \mathcal{R} \rangle,$$
where $\Sigma$ is the set of positively-oriented critical $1$-cells, and $\mathcal{R}$ is the set of all words $M^{\infty}(w)$ in $\Sigma$, where $w$ runs over all boundaries of critical $2$-cells.
\end{theorem}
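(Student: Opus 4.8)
The plan is to begin from a standard combinatorial presentation of $\pi_1(X^{(2)})$ and then use the Morse matching $W$ to convert it, by a controlled sequence of Tietze transformations, into the presentation $\langle \Sigma \mid \mathcal{R}\rangle$. Since attaching cells of dimension $\geq 3$ does not change the fundamental group, I would first replace $X$ by its $2$-skeleton. Next I would record the structure that $W$ imposes on low-dimensional cells. No $0$-cell can be collapsible, since $W$ never maps into dimension $0$; hence every $0$-cell is either the unique critical $0$-cell or is redundant, matched by $W_0$ to a collapsible $1$-cell. Because $W$ has no non-stationary closed paths and $X$ is connected with a single critical $0$-cell, the collapsible $1$-cells together with the $0$-cells form a spanning tree $T$ of $X^{(1)}$. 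Collapsing $T$ yields the classical presentation in which the generators are the remaining $1$-cells (the redundant and critical ones) and the relators are the boundary words of the $2$-cells, with every edge of $T$ read as the identity.

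The heart of the argument is the elimination of the redundant $1$-cells. Each redundant $1$-cell $e$ lies in the domain of $W_1$ and is matched to a collapsible $2$-cell $c = W(e)$ with $e < c$. Since $X$ is regular, the attaching circle of $c$ embeds in $X^{(1)}$, so $e$ occurs exactly once in the boundary word of $c$; after a cyclic rotation and a choice of orientation this boundary word is $e\,w_3$, and the corresponding relator lets me solve $e = w_3^{-1}$. This is exactly a Tietze move removing the generator $e$ and the relator coming from $c$, and it is precisely the ``simple homotopy'' move of the rewriting system. The delicate point is to check that these eliminations can be carried out consistently: the word $w_3$ must not reintroduce $e$, and the whole process must terminate. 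This is governed by conditions (1)--(3) in the definition of a discrete vector field together with the Morse (acyclicity) hypothesis. Disjointness of image and domain guarantees that a collapsible $1$-cell is never also redundant, and the absence of non-stationary closed $W$-paths provides a partial order on the redundant $1$-cells in which each $w_3$ involves only critical $1$-cells, tree edges, and redundant cells of strictly lower order. This is the step I expect to be the main obstacle, since it is what makes the substitution well-founded.

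Finally I would identify the end product of these Tietze transformations with the stated presentation. After the tree edges have been trivialized (the ``collapsing'' move), the redundant cells substituted away (the ``simple homotopy'' move), and free cancellations performed (the ``free cancellation'' move), the surviving generators are exactly the critical $1$-cells $\Sigma$, and each surviving relator is the boundary word of a critical $2$-cell with all non-critical $1$-cells eliminated. By construction this relator is obtained from the boundary word $w$ of the critical $2$-cell by repeatedly applying the three moves, so it equals $M^{\infty}(w)$. The fact that $M^{\infty}(w)$ is well defined independently of the order of moves, guaranteed by Proposition 2.4 of \cite{FarleySabalka1}, is what lets me assert that the relator set is unambiguously $\mathcal{R}$. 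Since each Tietze transformation preserves the isomorphism type of the group, the resulting presentation $\langle \Sigma \mid \mathcal{R}\rangle$ presents $\pi_1(X)$, completing the argument.
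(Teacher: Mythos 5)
Your argument takes a genuinely different route from the paper: the paper's proof is essentially a one-line reduction to Theorem 2.5 of \cite{FarleySabalka1} (plus the observation, via Propositions 2.2(2) and 2.3(5) of that paper, that with a single critical $0$-cell the collapsible $1$-cells form a maximal tree), whereas you reconstruct the underlying argument from scratch by Tietze transformations. Most of your reconstruction is sound: the spanning-tree claim is correct (acyclicity of $W$ in dimension $0$ rules out cycles among collapsible edges, and the edge count forces a tree), regularity does give that a redundant $1$-cell $e$ occurs exactly once in the boundary word of $W(e)$, and the absence of non-stationary closed $W$-paths in dimension $1$ is exactly what makes the successive eliminations of redundant $1$-cells well-founded.

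There is, however, a genuine gap at the last step. The standard presentation of $\pi_{1}(X^{(2)})$ has one relator for \emph{every} $2$-cell, and your Tietze moves only consume the relators coming from \emph{collapsible} $2$-cells (one is spent for each redundant $1$-cell eliminated). The relators coming from \emph{redundant} $2$-cells --- those in the domain of $W_{2}$ --- survive the process, so what you actually obtain is $\langle \Sigma \mid \mathcal{R} \cup \mathcal{R}' \rangle$, where $\mathcal{R}'$ consists of the rewritten boundary words of the redundant $2$-cells. The theorem asserts these extra relators can be dropped, and a Tietze removal of a relator requires exhibiting it as a consequence of the remaining ones. The data needed for that lives in the $3$-skeleton, which you discarded at the outset: each redundant $2$-cell $\sigma$ is matched to a collapsible $3$-cell $W(\sigma)$ whose boundary sphere provides the syzygy expressing the relator of $\sigma$ as a product of conjugates of the relators of the other $2$-cells in $\partial W(\sigma)$, and acyclicity of $W$ in dimension $2$ makes the elimination of $\mathcal{R}'$ well-founded just as in your dimension-$1$ argument. (Equivalently, one can invoke the full discrete Morse homotopy equivalence onto a complex with only critical cells, which is what \cite{FarleySabalka1} ultimately does.) This matters for the intended application: for $n \geq 3$ the complexes $\ud{n}{\Gamma}$ have $3$-cells and genuinely redundant $2$-cells, so the gap is not vacuous. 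Reinstating the $3$-skeleton and adding this second round of well-founded eliminations would complete your proof.
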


\begin{proof}
This theorem is almost exactly Theorem 2.5 of \cite{FarleySabalka1}, with the simplifying assumption that there is only $1$ critical $0$-cell.  Under this assumption, Propositions 2.2(2) and 2.3(5) of \cite{FarleySabalka1} imply that the union of all collapsible edges is a maximal tree in $X$.  Thus, the tree $T$ named in Theorem 2.5 of \cite{FarleySabalka1} contains no critical $1$-cells.  The statement of the theorem given here then follows.
\end{proof}

\begin{definition}
For a given $X$ and $W$ as in Theorem \ref{thm:group}, we call the presentation of Theorem \ref{thm:group} a \emph{Morse presentation} for $\pi_1X$.
\end{definition}

\section{Morse Matchings for Configuration Spaces of Graphs}\label{sec:DGVF}

\subsection{The complex $UD^n\Gamma$} 

The configuration spaces $\ucng$ were used to define graph braid groups, but unfortunately these have no natural CW-complex structure.  We need slightly different spaces, which we describe here.

Let $\Gamma$ be a finite connected graph and $n$ a natural number.  Let $\Delta'$ denote the union of those open cells of $\prod^n \Gamma$ (equipped with the product cell structure) whose closures intersect the fat diagonal $\Delta = \{(x_1,\dots,x_n)|x_i=x_j\hbox{ for some }i\neq j\}$. Let $\dng$ denote the space $\prod^n \Gamma - \Delta'$, called the \emph{discretized configuration space} of $n$ points on $\Gamma$. Note that $\dng$ inherits a CW complex structure from the Cartesian product.  A cell in $\dng$ has the form $c_1 \times \dots \times c_n$ such that:  each $c_i$ is either a vertex or the interior of an edge, and the closures of the $c_i$ are mutually disjoint.

Let $\udng$ denote the quotient of $\dng$ by the action of the symmetric group $S_n$ which permutes the coordinates. Thus, an open cell in $\udng$ has the form $\{c_1, \dots, c_n\}$ such that:  each $c_i$ is either a vertex or the interior of an edge, and the closures are mutually disjoint.  The set notation is used to indicate that order does not matter.
                                                                                
Under most circumstances, the labeled or unlabeled configuration space of $\Gamma$ is homotopy equivalent to the corresponding discretized version.  Specifically:
                                                                                
\begin{theorem} \cite{Hu, Abrams, PrueScrimshaw}  \label{thm:Abrams}
For any integer $n\geq 2$ and any graph $\Gamma$ with at least $n$ vertices, the labeled (unlabeled) configuration space of $n$ points on $\Gamma$ strong deformation retracts onto $\dng$ ($\udng$) if
\begin{enumerate}
\item each path between distinct vertices of degree not equal to $2$ passes through at least $n-1$ edges; and

\item each path from a vertex to itself which is not null-homotopic in $\Gamma$ passes through at least $n+1$ edges.
\end{enumerate}
\end{theorem}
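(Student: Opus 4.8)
The plan is to build an explicit $S_n$-equivariant strong deformation retraction of the labeled continuous configuration space $\mathcal{C}^n\Gamma = \{(x_1,\dots,x_n)\in\prod^n\Gamma : x_i\neq x_j \text{ for } i\neq j\}$ onto $\dng$, and then to pass to the $S_n$-quotient, so that the unlabeled statement $\ucng \searrow \udng$ comes for free. First I would fix the length metric on $\Gamma$ assigning each edge length one, so that hypotheses (1) and (2) become honest lower bounds on path and cycle lengths, and note that the assumption that $\Gamma$ has at least $n$ vertices guarantees $\dng$ is nonempty (place the points on distinct vertices), so the retraction has a target at all.

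The first reduction is to a normal-form procedure that simultaneously moves each of the $n$ points either onto a vertex or into the interior of an edge, in such a way that the closures of the occupied cells become pairwise disjoint. Away from the essential vertices and the short cycles this is immediate, so the content lies in the local models: on a star neighborhood of a vertex of degree $\neq 2$, and on an embedded cycle, I would define a flow that spreads clustered points apart one step at a time --- exactly the process by which the triangular flaps and distorted squares of the $Y_k$ example are flattened onto the $1$-skeleton of a simplex. Condition (1) is what guarantees the spreading has somewhere to go: a path between essential vertices through at least $n-1$ edges contains at least $n$ vertices, leaving enough room to redistribute the points with disjoint closures while a cluster is being resolved. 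Condition (2) plays the same role for loops: a non-null-homotopic cycle through at least $n+1$ edges has at least $n+1$ vertices, enough to distribute all $n$ points around it with disjoint closures and one cell to spare, so that a configuration winding through the cycle can be combed to a discretized one without being forced into a collision; the extra vertex, absent in the path estimate, is precisely what a loop needs in order to unwind.

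The main obstacle will be assembling these local flows into a single globally defined, continuous, $S_n$-equivariant homotopy that fixes $\dng$ pointwise throughout and never allows two points to coincide at any intermediate time. The counting in (1) and (2) is exactly the combinatorial certificate that consistent choices always exist --- whenever a point must be displaced, an unoccupied adjacent cell is available to receive it --- so the crux is to show that these inequalities are genuinely sufficient, not merely convenient. I would also verify sharpness by exhibiting graphs that violate (1) or (2), such as a cycle with fewer than $n+1$ edges, for which $\ucng$ and $\udng$ already have non-isomorphic fundamental groups; this confirms that the stated bounds cannot be weakened and explains the asymmetry between the two conditions.
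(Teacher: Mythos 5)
The first thing to note is that the paper does not prove this statement at all: it is quoted as Theorem \ref{thm:Abrams} with an attribution to Hu, Abrams, and Prue--Scrimshaw, and the surrounding text explicitly records that Hu handled $n=2$, Abrams handled general $n$ with a slightly weaker first condition, and Prue--Scrimshaw supplied the fully general statement while also repairing a flaw in Abrams' argument. So there is no in-paper proof to compare against; your proposal has to stand on its own as a reconstruction of the cited work.

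As such, your outline points in the right general direction --- a normal-form/spreading procedure carried out $S_n$-equivariantly on the labeled space, with the two hypotheses read as vertex-counting certificates that a displaced point always has an unoccupied cell to move into --- and your counts are correct (a path through $n-1$ edges has $n$ vertices; a cycle through $n+1$ edges has $n+1$ vertices, with the extra one needed to unwind a loop). But the proposal has a genuine gap exactly where you flag ``the main obstacle'': you never construct the global homotopy, and the existence of room for each point separately does not by itself yield a single continuous, collision-free, $S_n$-equivariant deformation that fixes $\dng$ pointwise --- the local moves must be scheduled coherently as the configuration varies continuously, and points competing for the same vertex must be resolved in a way that depends continuously on the configuration. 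This is not a routine verification: it is precisely the step at which Abrams' original proof contained the error that Prue and Scrimshaw had to correct, so declaring it the ``crux'' and stopping there leaves the theorem unproved. One smaller imprecision: condition (1) concerns all vertices of degree $\neq 2$, including degree-one vertices, not only the essential (degree $\geq 3$) ones, so your local models must also cover stars at leaves. Your closing remark about sharpness (e.g.\ a cycle with fewer than $n+1$ edges) is a reasonable sanity check but is not part of what the theorem asserts and does not substitute for the missing construction.
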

This theorem was orignally proved by Hu \cite{Hu} for the case $n = 2$.  Arbitrary $n$ was done by Abrams \cite{Abrams}, but with a slightly weaker first condition.  Full generality is proven by Prue and Scrimshaw \cite{PrueScrimshaw}, who also address a minor flaw in the proof of Abrams.

A graph $\Gamma$ satisfying the conditions of this theorem for a given $n$ is called \emph{sufficiently subdivided} for this $n$.  It is clear that, for any fixed $n$, every graph is homeomorphic to a sufficiently subdivided graph.

Throughout the rest of the paper, we work exclusively with the space $\udng$ where $\Gamma$ is sufficiently subdivided for $n$.  Also from now on, ``edge'' and ``cell'' will refer to closed objects.

\subsection{Some definitions related to the Morse matchings}

This section contains definitions which will help us when we define Morse matchings on the complexes $\ud{n}{\Gamma}$ in Subsection \ref{sec:Morsematching}.

As usual, let $\Gamma$ be a finite connected graph.  For a vertex $v$ of $\Gamma$, let $deg_{\Gamma}(v)$ or just $deg(v)$ denote the degree of $v$.  If $deg(v) \geq 3$, then $v$ is called an \emph{essential} vertex.

We define an order on the vertices of $\Gamma$ as follows.  Choose a maximal tree $T$ in $\Gamma$. Edges outside of $T$ are called \emph{deleted edges}.  Pick a vertex $\ast$ of degree $1$ in $T$ to be the root of $T$. Choose an embedding of the tree $T$ into the plane.  Begin at the basepoint $\ast$ and walk along the tree, following the leftmost branch at any given intersection, and consecutively number the vertices by the order in which they are first encountered.  When a vertex of degree one is reached, turn around.  The vertex adjacent to $\ast$ is assigned the number $1$, and $\ast$ is (in effect) numbered $0$.  Note that this numbering depends only on the choice of $\ast$ and the embedding of the tree.  For two vertices $v$ and $w$, we write $v < w$ if $v$ receives a smaller number than $w$ (i.e., if $v$ is encountered before $w$ in a clockwise traversal of $T$ from $\ast$).  We let $(v,w)$ denote the open interval in the order on vertices:  
	$$(v,w) = \{ u \in \Gamma^{0} \mid v < u < w \}.$$

When we write $[v,w]$, we mean the geodesic connecting $v$ to $w$ within $T$.  The \emph{meet} of two vertices $v$ and $w$ in $T$, denoted $v \wedge w$ is the greatest vertex in the intersection $[\ast, v] \cap [\ast, w]$.

For a given edge $e$ of $\Gamma$, let $\iota e$ and $\tau e$ denote the endpoints of $e$.  We orient each edge to go from $\iota e$ to $\tau e$, and so that $\iota e > \tau e$.  Thus, if $e \subseteq T$, the geodesic segment $[ \iota e, \ast ]$ in $T$ must pass through $\tau e$. For a vertex $v$, let $e(v)$ be the unique edge in $T$ satisfying $\iota (e(v)) = v$.  

The choice and embedding of $T$ allow us to refer to directions in $T$, as follows.  If $v$ is a vertex in the tree $T$, we say that two vertices $v_1$ and $v_2$ lie in the same \emph{direction} from $v$ if the geodesics $[ v , v_1 ] , [v , v_2 ] \subseteq T$ start with the same edge.  Thus, there are $deg_T(v)$ directions from a vertex $v$.  We number these directions $0, 1 , 2, \ldots, deg_T(v)-1$, beginning with the direction represented by $[v, \ast]$, numbered $0$, and proceeding in clockwise order.  A vertex is considered to be in direction $0$ from itself.  We let $d(v_{1}, v_{2})$ denote the direction from $v_{1}$ to $v_{2}$ (which is an integer).  When referring to a \emph{direction} from a vertex to an edge, we are referring to the direction from the given vertex to the initial vertex of the edge.  

To define the Morse matching on $\ud{n}{\Gamma}$, we will need the following definitions.  Let $c$ be a cell in $\ud{n}{\Gamma}$.  A vertex $v$ in $c$ is \emph{blocked} in $c$ if either $v = \ast$ or $e(v) \cap c' \neq \emptyset$ for some other vertex or edge $c'$ of $c$ aside from $v$; we say $v$ is \emph{blocked by} $\ast$ or $c'$ in $c$, respectively.  If $v$ is not blocked in $c$, it is called \emph{unblocked} in $c$.  Equivalently, $v$ is unblocked in $c$ if and only if $c \cup \{e\} - \{v\}$ is also a cell in $\ud{n}{\Gamma}$.  

An edge $e$ in $c$ is called \emph{non-order-respecting} in $c$ if:
\begin{enumerate}
\item $e \not \subseteq T$, or

\item there is some vertex $v$ of $c$ such that $v \in ( \tau e, \iota e)$
and $e(v) \cap e = \tau e$. 
\end{enumerate}
An edge $e$ in $c$ is \emph{order-respecting} in $c$ otherwise.  

\subsection{The Morse matching}
\label{sec:Morsematching}

In this subsection, we will define a Morse matching on $\ud{n}{\Gamma}$ for an arbitrary positive integer $n$ and finite connected graph $\Gamma$.  

Suppose that we are given a cell $c = \{ c_1, \ldots, c_n \}$ in $\ud{n}{\Gamma}$.  Assign each cell $c_i$ in $c$ a number as follows.  A vertex of $c$ is given the number from the above traversal of $T$.  An edge $e$ of $c$ is given the number for $\iota e$.  Arrange the cells of $c$ in a sequence $\mathcal{S}$, from the least- to the greatest-numbered.  The following definition of a Morse matching $W$ is equivalent to the definition of $W$ from \cite{FarleySabalka1}, by Theorem 3.6 of the same paper.

\begin{definition} \label{def:critical}
We define a Morse matching $W$ on $\ud{n}{\Gamma}$ as follows:
\begin{enumerate}
\item If an unblocked vertex occurs in $\mathcal{S}$ before all of the order-respecting edges in $c$ (if any), then $W(c)$ is obtained from $c$ by replacing the minimal unblocked vertex $v \in c$ with $e(v)$.  In particular, $c$ is redundant.

\item If an order-respecting edge occurs in $\mathcal{S}$ before all of the unblocked vertices of $c$ (if any), then $c \in \operatorname{image} (W)$, i.e., $c$ is collapsible.  The cell $W^{-1}(c)$ is obtained from $c$ by replacing the minimal order-respecting edge $e$ with $\iota e$.

\item If there are neither unblocked vertices nor order-respecting edges
in $c$, then $c$ is critical.
\end{enumerate}
\end{definition}

\begin{example}
Figure \ref{fig:classic} depicts three different cells of $\ud{4}{T_{min}}$ for the given tree $T_{min}$.  In each case, the number label of each vertex and of the initial vertex of each edge are shown, in the sense mentioned above.

\begin{figure}[!h]
\centering

\subfigure[]{
  \includegraphics{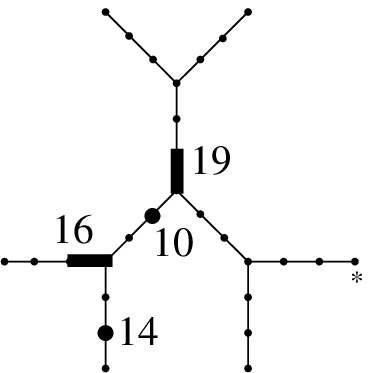}
}\hfill
\subfigure[]{
  \includegraphics{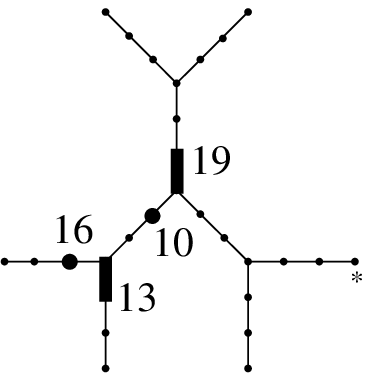}
}\hfill
\subfigure[]{
  \includegraphics{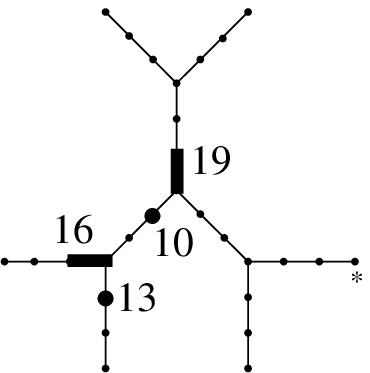}
}

\caption{Three different cells of $\ud{n}{T}$.  The first is redundant, the second collapsible, and the third critical.}
\label{fig:classic}
\end{figure}

We let $c_{1}$ denote the cell depicted in (a). The vertex numbered $10$ is blocked.  The vertex numbered $14$ is unblocked, so $c_{1}$ is redundant.  Note that edge $e(16)$ is order-respecting and edge $e(19)$ is non-order-respecting.  We get $W(c_1)$ by replacing vertex $14$ with the unique edge in $T$ having vertex $14$ as its initial vertex, i.e., $[13,14]$.  

Let $c_2$ denote the cell depicted in (b).  The vertex numbered $10$ in $c_2$ is blocked.  The edge $e(13)$ is order-respecting, so $c_2$ is collapsible.  Note that vertex $16$ is blocked and edge $e(19)$ is non-order-respecting.  The description of $W^{-1}$ above implies that $W^{-1}(c_2)$ is obtained from $c_2$ by replacing edge $e(13)$ with its initial vertex.

The cell depicted in (c) is critical since vertices $10$ and $13$ are both blocked, and the edges $e(16)$ and $e(19)$ are non-order-respecting.
\end{example}

\section{Presentations for Graph Braid Groups}\label{sec:main}

\subsection{Preliminary notation}\label{subsec:prelim}

In this subsection, we include some final notation before we proceed to the main argument.

We will sometimes use set notation in order to describe edge-paths.  For instance, suppose that $\Gamma$ is the
tree from Figure \ref{fig:classic}.  The edge-path $\{ [\ast, 9], 5, 6, 10 \} \subseteq \ud{n}{\Gamma}$ 
starts at the vertex $\{ 5, 6, 9, 10 \}$ and ends at $\{ \ast, 5, 6, 10 \}$.  The edges of this edge-path are determined
by moving the vertex whose original position is $9$ back to the basepoint $\ast$.  (There are $6$ edges in all.)  
All of the other vertices are fixed.

Let $c$ be a $k$-cell in $\ud{n}{\Gamma}$.  If $c$ has no unblocked vertices, define $r(c) = c$.  Otherwise, if $v$ is the smallest unblocked vertex in $c$, define $r(c)$ to be the $k$-cell that may be obtained from $c$ by replacing $v$ with $\tau(e(v))$ (i.e., the vertex adjacent to $v$ along the edge-path $[\ast, v] \subseteq T$, where $T$ is the maximal tree).  For instance, if $c = \{ \ast, 1, [7,8], 19 \}$ where $c \subseteq \ud{4}{T_{min}}$ (as in Figure \ref{fig:classic}), then $r(c) = \{ \ast, 1, [7,8], 9 \}$. We let $r^{\infty}(c)$ denote the result of repeatedly applying $r$ to $c$ until the output stabilizes (as it obviously must).

We will frequently need to describe cells in $\ud{n}{\Gamma}$ using vector notation, as was done in \cite{FarleySabalka1}.  Let $X$ be an essential vertex in $T$, the maximal subtree of $\Gamma$.  We let $\vec{a}$ be a vector with $deg_{T}(X)-1$ entries, all of which are non-negative integers.  Let $1 \leq i \leq deg_{T}(X) - 1$, and let $(\vec{a})_{j}$ denote the $j$th coordinate of $\vec{a}$, for $1 \leq j \leq deg_{T}(X)-1$.   We write $|\vec{a}|$ do denote the sum $\sum_{i = 1}^{deg_T(X)-1} (\vec{a})_i$ of entries of $\vec{a}$.  By $X_{i}[\vec{a}]$, we denote a configuration $c$ (or subconfiguration) of vertices and edges such that:

\begin{enumerate}
\item There is an edge $e$ such that $\tau(e) = X$, and $e$ points in direction $i$ from $X$.  There are $(\vec{a})_{i} - 1$ vertices blocked by $\iota(e)$;

\item For each $j \neq i$, there are $(\vec{a})_{j}$ vertices lying in direction $j$ from $X$, and blocked by the edge $e$.
\end{enumerate}
Note that if $c \in \ud{n}{\Gamma}$, then $1 \leq |\vec{a}| \leq n$.  When we use this vector notation, we typically assign each essential vertex in $\Gamma$ a capital letter of the alphabet, in alphabetical order.  

For instance, suppose that $T = \Gamma$ is as in Figure \ref{fig:classic}.  We write $A_{2}[2,2]$ in place of $\{ 4, 5, [3,7], 8 \}$, $B_{2}[1,3]$ in place of $\{ 10, [9,19], 20, 21 \}$, and so on.  We can also use additive notation to describe higher-dimensional cells.  For instance, the critical $2$-cell depicted in Figure \ref{fig:classic}(c) can be described by the notation $B_{2}[1,1] + C_{2}[1,1]$. The collapsible $2$-cell in Figure \ref{fig:classic}(b) can be described as $B_{2}[1,1] + C_{1}[1,1]$.

We will also occasionally extend this additive notation to describe cells containing deleted edges $e$ and vertices blocked at the basepoint. Thus, for instance, we might write $X_{i}[\vec{a}] + e_{1} + e_{2} + k\ast$ to denote a cell consisting of the configuration $X_{i}[\vec{a}]$ along with the deleted edges $e_{1}$ and $e_{2}$, and $k$ vertices $\ast, 1, \ldots, k-1$ blocked at the basepoint.  In practice, we will typically leave off $k \ast$ when describing a cell in $\ud{n}{\Gamma}$, since this should cause no confusion.

It is easy to deduce under fairly general assumptions (for instance, see Assumption \ref{assumption}) that a formal sum consisting of terms of the form $X_{i}[\vec{a}]$ (where $X$ is an essential vertex), and others of the form $e$ (where $e$ is a deleted edge), is critical if and only if each term of the form $X_{i}[\vec{a}]$ satisfies $(\vec{a})_{j} > 0$, for some $j<i$.  (Note:  we also assume of course that $\vec{a}$ is a vector with non-negative integer entries, and that $(\vec{a})_{i} > 0$.)  Indeed, the critical cells of $\ud{n}{\Gamma}$ are in one-to-one correspondence with such formal sums.  (A more explicit version of this statement appears as Proposition 3.9 of \cite{FarleySabalka1}.)      

We need some final definitions for technical reasons.  We multiply vectors by constants in the obvious way.  We also define subtraction of a constant from a vector, as follows.  Let $\vec{a} - 1$ be the result of subtracting $1$ from the first non-zero entry of $\vec{a}$.  We let $\vec{a} - 2= (\vec{a} - 1) - 1$, and so on.  We note that this notation must be used with caution, since $(\vec{a} + \vec{b}) - 1 \neq (\vec{a} - 1) + \vec{b}$ in general.  Finally, we let $\delta_{i}$ denote the vector such that $(\delta_{i})_{i}=1$ and $(\delta_{i})_{j} = 0$, for all $j \neq i$.  The number of components of $\delta_{i}$ will always be clear from the context.

\subsection{Relative flow, cost, and the main theorem} 
\label{subsec:flowcostmain}

We now give some examples, which are intended to the smooth the way for our proofs of
Theorem \ref{thm:biggie}, Proposition \ref{prop:vanishing}, and Proposition \ref{prop:cost}.
The following example is intended, above all, to motivate Definition \ref{def:rflow}.  

\begin{figure} [!h]
\begin{center}
\includegraphics{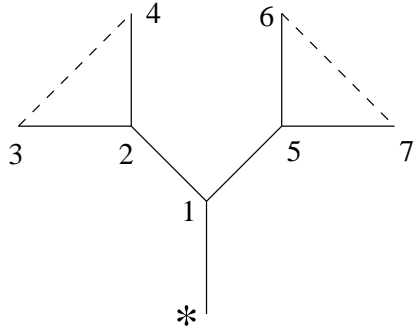}
\end{center}
\caption{A graph $\Gamma$ with a maximal tree $T$ specified.}
\label{fig:balloongraph1}
\end{figure}

\begin{example} \label{ex:first}
We let $\Gamma$ be the graph depicted in Figure \ref{fig:balloongraph1}, and consider the two-strand braid group of $\Gamma$.  Let $T$ denote the maximal tree in $\Gamma$ consisting of all edges and vertices of $\Gamma$,except for the dashed edges in Figure \ref{fig:balloongraph1}.  The figure also indicates a choice of embedding of $T$ in $\mathbb{R}^{2}$, a choice of basepoint in $T$, and the resulting numbering of the vertices of $\Gamma$.

It is a simple matter to list the critical $1$-cells and $2$-cells.  The critical $1$-cells are as follows:
	$$ \{ 3, [2,4] \}, \{ 2, [1,5] \}, \{ 6, [5,7] \}, \{ \ast, [3,4] \}, \{ \ast, [6,7] \}.$$
There is a single critical $2$-cell:
	$$ \{ [3,4], [6,7] \}.$$
It follows from Theorem \ref{thm:group} that $B_{2} \Gamma$ has a presentation with five generators and one relator.

\begin{figure} [!t]
\begin{center}
\includegraphics{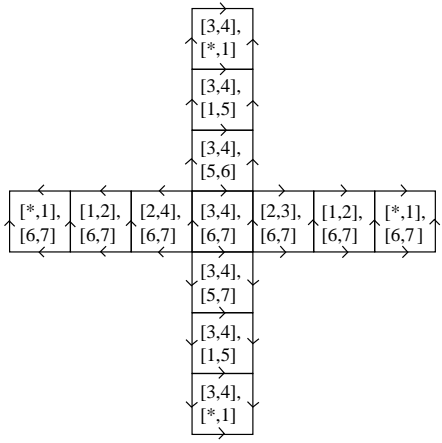}
\end{center}
\caption{The picture represents an intermediate stage in the computation of the relator in a presentation for $B_{2}\Gamma$.  The central square $c$ is a critical $2$-cell, and each of the arms is a sequence of squares, across which the faces of $c$ are pushed by the Morse matching.}
\label{fig:plus}
\end{figure}

Now we will find the relator in this presentation.  The critical $2$-cell $\{ [3,4], [6,7] \}$ can be identified with the product $[3,4] \times [6,7]$.  If we recall that $4$ is the initial vertex of $[3,4]$ and $7$ is the initial vertex of $[6,7]$ (and we treat the factor of $[3,4]$ as the first factor), then we arrive at the central oriented square in Figure \ref{fig:plus}.  The bottom left vertex of this square is $\{ 4, 7 \}$, the upper left corner is $\{ 4, 6 \}$, the upper right corner is $\{ 3, 6 \}$, and the lower right corner is $\{ 3, 7 \}$.

Consider the bottom horizontal face of the critical $2$-cell $\{ [3,4], [6,7] \}$.  This is the $1$-cell $\{ [3,4], 7 \}$.  Clearly this $1$-cell is redundant; its image under the Morse matching is $\{ [3,4], [5,7] \}$ (as pictured in Figure \ref{fig:plus}; this cell is immediately below $\{ [3,4], [6,7] \}$).  The Morse matching therefore tells us to push the $1$-cell $\{[3,4], 7 \}$ across the $2$-cell $\{ [3,4], [5,7] \}$.  In the process, $\{ [3,4], 7 \}$ is smeared across the edge-path
	$$\{ 4, [5,7] \} \{ [3,4], 5 \} \{ 3, [5,7] \}^{-1},$$
which makes up the sides and bottom of the $2$-cell $\{ [3,4], [5,7] \}$.  We repeat this procedure two more times, pushing $\{ [3,4], 5 \}$ across $\{ [3,4], [1,5] \}$, and finally pushing $\{ [3,4], 1 \}$ across $\{ [3,4], [\ast, 1] \}$.  The cumulative effect is to smear the original $1$-cell $\{ [3,4], 7 \}$ across the edge-path
	\begin{align}
	&\{ 4, [\ast, 7] \} \{ [3,4], \ast \} \{ 3, [\ast, 7] \}^{-1}.  \tag{P1} \label{1}
	\end{align}
Note that we haven't yet completed the process of pushing redundant $1$-cells across their matching collapsible $2$-cells, since $\{ 4, [5,7] \}$, $\{ 4, [1,5] \}$, $\{ 3, [1,5] \}$, and $\{ 3, [5,7] \}$ are all redundant $1$-cells.  We will finish after introducing a lemma (Lemma \ref{lemma:redundant} below).  
For now, we note that Figure \ref{fig:plus} shows that the Morse matching smears the left, top, and right sides of the critical $2$-cell $\{ [3,4], [6,7] \}$ across the edge-paths
	\begin{align}
	&\{ [\ast,4], 7 \} \{ \ast, [6,7] \} \{ [\ast, 4], 6 \}^{-1}, \tag{P2} \label{2}\\
	&\{ 4, [\ast,6] \} \{ [3,4], \ast \} \{ 3, [\ast, 6] \}^{-1}, \tag{P3} \label{3}\\
	&\{ [\ast,3], 7 \} \{ \ast, [6,7] \} \{ [\ast, 3], 6 \}^{-1}, \tag{P4} \label{4}
	\end{align}
respectively.

\end{example}

\begin{lemma} \label{lemma:redundant}
(Redundant $1$-cells lemma)  Assume $c = \{ v_{1}, \ldots, v_{n-1}, e \}$ is a redundant $1$-cell in $\ud{n}{\Gamma}$.  We assume that the vertex $v_{1}$ is the smallest unblocked vertex in $c$.  Assume that $T \subseteq \Gamma$ is the maximal tree in $\Gamma$.
\begin{enumerate}
\item If $(\tau(e(v_{1})), v_{1}) \cap \{ v_{2}, \ldots, v_{n-1}, \tau(e), \iota(e) \} = \emptyset$, then $c \dot{\rightarrow} r(c)$.

\item If $\{ v_{1}, \ldots, v_{n-1} \} \cap (\tau(e), \iota(e)) = \emptyset$ and $e \subseteq T$, then $M^{\infty}(c) = 1$.
\end{enumerate}
\end{lemma}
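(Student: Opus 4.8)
The plan is to prove the two parts together: part (1) is a single simple-homotopy move that reduces to part (2) applied to two simpler ``side cells,'' while part (2) is obtained by iterating part (1) and then collapsing. Since $c$ is redundant with smallest unblocked vertex $v_1$, Definition \ref{def:critical}(1) tells us that $W(c)$ is the square obtained by expanding $v_1$ into the \emph{tree} edge $e(v_1)$, namely $W(c)=\{v_2,\dots,v_{n-1},e(v_1),e\}$, which we picture as $e(v_1)\times e$. I would read off the boundary word of this square starting along $c$ and apply the simple-homotopy move, obtaining $c\rightarrow \eta_a\, r(c)\, \eta_b^{-1}$, where $\eta_a=\{v_2,\dots,v_{n-1},e(v_1),\iota e\}$ and $\eta_b=\{v_2,\dots,v_{n-1},e(v_1),\tau e\}$ are the two sides of the square along which $e(v_1)$ is swept (holding $\iota e$, resp. $\tau e$, fixed) and $r(c)$ is the opposite side. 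The crucial feature is that the swept edge in $\eta_a$ and $\eta_b$ is the tree edge $e(v_1)$, and the hypothesis $(\tau(e(v_1)),v_1)\cap\{v_2,\dots,v_{n-1},\tau e,\iota e\}=\emptyset$ says exactly that none of the remaining vertices of $\eta_a$ or $\eta_b$ lies in the open interval $(\tau(e(v_1)),\iota(e(v_1)))=(\tau(e(v_1)),v_1)$. Hence each of $\eta_a,\eta_b$ satisfies the hypothesis of part (2), so $M^\infty(\eta_a)=M^\infty(\eta_b)=1$, and therefore $c\,\dot{\rightarrow}\,r(c)$.

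For part (2) I would iterate part (1), pushing the smallest unblocked vertex one edge toward $\ast$ at each stage, to obtain $c\,\dot{\rightarrow}\,r(c)\,\dot{\rightarrow}\,r^2(c)\,\dot{\rightarrow}\cdots\,\dot{\rightarrow}\,r^\infty(c)$, and then finish by observing that $r^\infty(c)$ has no unblocked vertices while its edge $e\subseteq T$ is order-respecting, so $r^\infty(c)$ is a collapsible $1$-cell and a single collapsing move gives $M^\infty(c)=1$. The geometric input that keeps this running is that a token moved rootward along $T$ can never enter the open interval $(\tau e,\iota e)$, so $e$ stays order-respecting throughout (this follows from the tree structure and the depth-first numbering: ancestors of a token numbered below $\tau e$ stay below $\tau e$, descendants of $\iota e$ are blocked by $e$ before they can pass it, and tokens in the far subtrees stay above $\iota e$). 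The genuine work is verifying that at each push the current lead vertex satisfies the interval condition needed to apply part (1) — equivalently, that no already-blocked token sits in the numbering-interval just below the lead vertex — since this does not follow merely from ``smallest unblocked'' and requires tracking which tokens remain blocked as others move.

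The main obstacle is that parts (1) and (2) are mutually recursive — part (1) calls part (2) on $\eta_a,\eta_b$, and part (2) calls part (1) at every push — so the argument must be organized as one induction on a well-founded complexity, lest it become circular. The property I would place at the center is precisely that $e(v)$ is \emph{always} a tree edge: even when the edge $e$ of $c$ is a deleted edge (as part (1) permits), the side cells $\eta_a,\eta_b$ sweep tree edges that sit strictly closer to the root, so the reduction truly flows into $T$ toward $\ast$ rather than cycling. Making this decrease precise — for instance an induction on the tree-distance of the active edge from the basepoint, with a secondary induction on $\sum_i d_T(\ast,v_i)$ that each $r$-step strictly lowers — together with the blocking bookkeeping described above, is the technical heart of the proof; the ``cannot enter $(\tau e,\iota e)$'' fact is exactly what guarantees that every intermediate edge remains order-respecting so that the terminal collapse in part (2) is legitimate.
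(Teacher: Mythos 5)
First, note that the paper does not actually prove this lemma: both statements are imported wholesale from \cite{FarleySabalka1}, so the only comparison available is with the architecture indicated there (statement (2) is proved first, as an intermediate step toward (1)). Your skeleton is consistent with that: part (1) is one simple-homotopy move $c\rightarrow \eta_a\,r(c)\,\eta_b^{-1}$ followed by part (2) applied to the side cells $\eta_a,\eta_b$ (and your observation that the hypothesis of (1) hands the side cells exactly the hypothesis of (2), with tree edge $e(v_1)$, is correct); part (2) is an iteration terminating in a collapse. However, there are two genuine gaps. The first is the induction measure. You assert that ``the side cells $\eta_a,\eta_b$ sweep tree edges that sit strictly closer to the root,'' and propose a primary induction on the tree-distance of the active edge from $\ast$. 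This is false: the active edge of $\eta_a$ is $e(v_1)$, and $v_1$ can lie arbitrarily deep in $T$ while $e$ sits adjacent to the basepoint, so the active edge can move \emph{away} from $\ast$. Your secondary measure $\sum_i d_T(\ast,v_i)$ also fails to break the tie, since in passing from $c$ to $\eta_a$ the vertex $v_1$ is replaced by $\iota(e)$, which may be farther from $\ast$ than $v_1$. A measure that does work is lexicographic: first the total tree-distance of all $n$ tokens from $\ast$ (counting a token on an edge $e'$ at the far endpoint $\iota(e')$), under which $r(c)$ and $\eta_b$ strictly drop while $\eta_a$ stays equal; then the label $\iota(e')$ of the edge of the cell, which strictly drops for $\eta_a$ since $\iota(e(v_1))=v_1<\iota(e)$ in the redundant case. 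Without some such correction the mutual recursion you describe is not well-founded.

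The second gap is the one you yourself flag as ``the genuine work'' and then leave undone: showing that at each stage of the iteration in part (2) the current cell satisfies the interval hypothesis of part (1). This is not automatic. A cell can satisfy the hypothesis of (2) while violating that of (1): take $T$ with $\ast-1-2$, a first branch $2-3-4$ and a second branch $2-5$, and the cell $\{5,[3,4]\}$; here $(\tau(e(5)),5)=(2,5)$ contains both endpoints of $e=[3,4]$. The resolution is that such a cell is necessarily \emph{collapsible} (here $\iota(e)=4<5$), so the push is never attempted; in the redundant case one must prove that no cell of the configuration meets $(\tau(e(u)),u)$, by tracing the chain of blockings of any would-be occupant of that interval back either to an unblocked vertex smaller than $u$ (contradicting minimality of $u$) or to an endpoint of $e$ lying below $u$ (contradicting redundancy, since then $\iota(e)<u$). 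Relatedly, your chain $c\,\dot{\rightarrow}\,r(c)\,\dot{\rightarrow}\cdots\dot{\rightarrow}\,r^{\infty}(c)$ is not quite right: the simple-homotopy move is only available while the cell is redundant, and the iteration can reach a collapsible cell that still has unblocked vertices (as in the example above), at which point one must collapse rather than continue to $r^{\infty}(c)$. None of these repairs is deep, but as written the proposal does not constitute a proof; it is an accurate outline with the two load-bearing verifications missing.
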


\begin{proof}
Statement (1) is called ``the redundant 1-cells lemma" in \cite{FarleySabalka1}; we refer the reader to that source for a proof.  
Statement (2) is proved in \cite{FarleySabalka1} as an intermediate step in proving (1).
\end{proof}

\begin{example}
We apply the previous lemma to the edge paths from the end of Example \ref{ex:first}.  Edge-path \eqref{1} contains the subpath $\{4, [\ast, 7]\}$, which is made up of the edges $\{ 4, [5,7] \}$, $\{4, [1,5] \}$, and $\{ 4, [\ast, 1]\}$.  The first and last of these edges flow to the trivial word by Lemma \ref{lemma:redundant}(2).  The remaining edge, $\{4, [1,5] \}$, flows to $\{2, [1,5]\}$ by Lemma \ref{lemma:redundant}(1).  The edge $\{ [3,4], \ast \}$ is critical, and therefore stable under the flow.  We can apply the flow to the subpath $\{3, [\ast, 7] \}^{-1}$ just as we did to the subpath $\{ 4, [\ast, 7] \}$.  The result is that $\{3, [\ast, 7]\}^{-1}$ flows to $\{2, [1,5] \}^{-1}$.  We conclude that \eqref{1} flows to      
	$$\{ 2, [1,5] \} \{ [3,4], \ast \} \{ 2, [1,5] \}^{-1}.$$
Similarly, the edge-paths \eqref{2}, \eqref{3}, and \eqref{4} flow, respectively, to:
	$$\{ \ast, [6,7] \}, \qquad \{ 2, [1,5] \} \{ [3,4], \ast \} \{ 2, [1,5] \}^{-1}, \qquad \{ \ast, [6,7] \}.$$
If we rename the critical $1$-cells $\{ 2, [1,5] \}$, $\{ 3, [2,4] \}$, $\{ 6, [5,7] \}$, $\{ [3,4], \ast \}$, and $\{ \ast, [6,7] \}$ as $A$, $B$, $C$, $\mathbf{e}_{1}$, and $\mathbf{e}_{2}$ (respectively), then
	$$ B_{2}\Gamma \cong \langle A, B, C, \mathbf{e}_{1}, \mathbf{e}_{2} \mid 
	[ \mathbf{e}_{2}, A\mathbf{e}_{1}A^{-1} ] \rangle,$$
by Theorem \ref{thm:group}.  It is easy to rewrite this presentation to show that $B_{2} \Gamma \cong F_{3} \ast \mathbb{Z}^{2}$, where $F_{3}$ is the free group on three generators.
\end{example} 

\begin{definition} \label{def:rflow}
(Relative Flow)  Let $\mathcal{V} = \{ v_{1}, \ldots, v_{n-1} \}$ be a collection of vertices, and  let $e$ be a closed edge in $\Gamma$.  We assume that $e$ is non-order-respecting in $\{ v_{1}, \ldots, v_{n-1}, e \}$.  We define an edge-path $f( \mathcal{V}; \iota(e))$ inductively as follows.  If all vertices in $\{ v_{1}, v_{2}, \ldots, v_{n-1}, e \}$ are blocked, then $f(\mathcal{V}; \iota(e))$ is the trivial path at $\{ v_{1}, \ldots, v_{n-1}, \iota(e) \}$.  Otherwise, let $v$ be the smallest unblocked vertex in the configuration $\{ v_{1}, \ldots, v_{n-1}, e \}$.  Without loss of generality, $v = v_{1}$.  We set $\mathcal{V}' = \{ \tau(e(v_{1})), v_{2}, \ldots, v_{n-1} \}$.  By definition, 
	$$ f(\mathcal{V}; \iota(e)) = \{ e(v_{1}), v_{2}, \ldots, v_{n-1}, \iota(e) \} f( \mathcal{V}'; \iota(e)).$$ 
The latter equation completely determines $f( \mathcal{V}; \iota(e))$.  We can define $f(\mathcal{V}; \tau(e))$ analogously: simply replace $\iota(e)$ with $\tau(e)$ in the above notation (while leaving the definition of $\mathcal{V}'$ unchanged).

The edge-path $f(\mathcal{V}; \iota(e))$ is called the \emph{flow of $\mathcal{V}$ relative to $\iota(e)$}.  We write $c(\mathcal{V}; \iota(e)) = M^{\infty}(f(\mathcal{V};\iota(e)))$.  The string $c(\mathcal{V}; \iota(e))$ is a word in the free group on critical $1$-cells called the \emph{cost of $\mathcal{V}$ relative to $\iota(e)$}.  
\end{definition}

\begin{note} \label{note:intuitive}
If $\mathcal{V} = \{ v_{1}, \ldots, v_{n-1} \}$ is a collection of vertices and $e$ is a non-order-respecting edge in the configuration $\{ v_{1}, \ldots, v_{n-1}, e \}$, then the relative flow is an edge-path in $\ud{n}{\Gamma}$ with a simple direct description.  We start with the vertex $\{ v_{1}, \ldots, v_{n-1}, \iota(e) \}$ in $\ud{n}{\Gamma}$ and repeatedly move the vertices of $\{ v_{1}, \ldots, v_{n-1}, \iota(e) \}$ while following these rules:
\begin{enumerate}
\item The vertex $\iota(e)$ never moves.

\item At each stage, we move the smallest unblocked vertex $v$ one step within the tree $T$ towards the basepoint $\ast$.  Here a vertex $v$ is considered blocked (for the sake of this definition) if the vertex is blocked in the ordinary sense by $e$.
\end{enumerate}
\end{note}

\begin{note}
Although the notation $f(\mathcal{V}; \iota(e))$ features a vertex ($\iota(e)$) in the second place, we need to keep track of both an edge ($e$) and an endpoint ($\iota(e)$, in this case).  We write $\iota(e)$ for the sake of compression.  It would be ambiguous to specify only a vertex, since the edge $e$ is allowed to block the movements of vertices.  Nevertheless, we will sometimes write $c(\mathcal{V}; v_{n})$ (i.e., specify only a vertex in the second coordinate) if the edge $e$ in the definition of the relative flow never blocks the movements of any vertices, or if the identity of the edge $e$ is clear from the context.
\end{note}

\begin{example}\label{example:letterh}
Consider the tree $T$ in Figure \ref{fig:letterh}.  Let $n=4$.  The given $T$ is sufficiently subdivided for $n$.  We let $\mathcal{V} = \{ 3,4,7 \}$ and $e = [6,10]$, and consider the relative flow $f( \mathcal{V}; \iota(e))$.  We have
	$$ f(\mathcal{V}; \iota(e)) = \{ [\ast,3], 4, 7, 10\} \{ \ast, [1,4], 7, 10\}. $$
Thus, $f(\mathcal{V}; \iota(e))$ is an edge-path of length $6$.

The cost $c(\mathcal{V}; \iota(e))$ is obtained by applying the flow to $f( \mathcal{V}; \iota(e))$.  It is clear in this case that $c( \mathcal{V}; \iota(e))=1$, since Lemma \ref{lemma:redundant}(2) applies to each of the six $1$-cells in $f(\mathcal{V}; \iota(e))$.

Now consider the flow $f(\mathcal{V}; \iota(e))$, where $\mathcal{V} = \{ 4, 7, 13 \}$ and $e = [6,10]$.  We get 
	$$f(\mathcal{V}; \iota(e)) = \{ [\ast,4], 7, 10, 13 \} \{ \ast, [1,13], 7, 10\},$$
an edge-path of length $7$. We now compute the cost $c(\mathcal{V}; \iota(e))$.  Each of the edges in $f(\mathcal{V}; \iota(e))$ flows to $1$, by Lemma \ref{lemma:redundant}(2), with the exception of $\{ \ast, [3,13], 7, 10 \}$, which flows to $\{ \ast, [3,13], 4, 5 \}$, by repeated applications of Lemma \ref{lemma:redundant}(1).  Therefore, $c(\mathcal{V}; \iota(e)) = \{ \ast, [3,13], 4, 5 \}$.
\end{example}

\begin{figure} [!h]
\begin{center}
\includegraphics{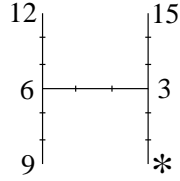}
\end{center}
\caption{Example \ref{example:letterh} is a computation of the relative flow $f(\mathcal{V}; \iota(e))$ for certain choices of vertices $\mathcal{V}$ and edges
$e$.}
\label{fig:letterh}
\end{figure}

\begin{figure}[!h]
\begin{center}
\hspace{-1.1in}
\input{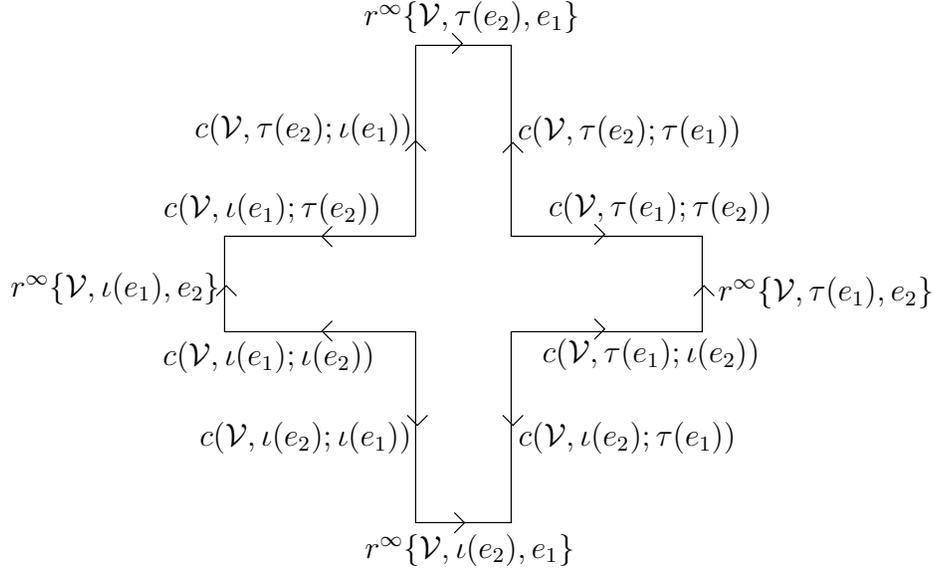}
\end{center}
\caption{This figure shows the general form of any relator in a graph braid group.}
\label{fig:biggie}
\end{figure}

\begin{theorem} \label{thm:biggie} (General Presentation Theorem)
Let $\Gamma$ be a finite connected graph.  Let $n \in \mathbb{N}$.  Assume that $\Gamma$ is sufficiently subdivided for $n$.  The braid group $B_{n}\Gamma$ has a presentation $\mathcal{P} = \langle \Sigma \mid \mathcal{R} \rangle$ where $\Sigma$ is the set of critical $1$-cells in $\ud{n}{\Gamma}$, and $\mathcal{R}$ is in one-to-one correspondence with the set of critical $2$-cells in $\ud{n}{\Gamma}$.  A critical $2$-cell $\{ v_{1}, \ldots, v_{n-2}, e_{1}, e_{2} \}$ corresponds to the relator in Figure \ref{fig:biggie}, where $\mathcal{V} = \{ v_{1}, \ldots, v_{n-2} \}$.
\end{theorem}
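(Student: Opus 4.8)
The plan is to invoke Theorem~\ref{thm:group}, which says that the relator attached to the critical $2$-cell $c=\{v_1,\dots,v_{n-2},e_1,e_2\}$ is precisely $M^{\infty}(\partial c)$; so the whole task is to compute this normal form. First I would use the product structure $c=e_1\times e_2$ (with the vertices of $\mathcal{V}=\{v_1,\dots,v_{n-2}\}$ held fixed) to write the boundary as the concatenation of its four faces,
\[
\partial c=\{\mathcal{V},e_1,\iota(e_2)\}\,\{\mathcal{V},\tau(e_1),e_2\}\,\{\mathcal{V},e_1,\tau(e_2)\}^{-1}\,\{\mathcal{V},\iota(e_1),e_2\}^{-1},
\]
the signs being dictated by the orientation convention $\iota e>\tau e$. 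Because $M^{\infty}$ is confluent (Proposition~2.4 of \cite{FarleySabalka1}) and each face flows to a word in critical $1$-cells, it suffices to compute $M^{\infty}$ of each face separately and then freely reduce the product.

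The crux is to flow a single face, say $F=\{\mathcal{V},e_1,\iota(e_2)\}$, whose only moving edge is $e_1$. Since $e_1$ is non-order-respecting in $c$ (its witness lies in $\mathcal{V}$, or else $e_1$ is deleted), it remains non-order-respecting in $F$, so the matching never moves $e_1$: it expands the minimal unblocked vertex and pushes $F$ across the resulting collapsible $2$-cell. I would show by induction on the number of unblocked vertices (mimicking the computation of (P1)--(P4) in Example~\ref{ex:first}) that each such push replaces the current face by the product of a ``left'' side-edge in which $e_1$ has collapsed to $\iota(e_1)$, the ``far'' face $r(F)$ in which $e_1$ survives, and the inverse of a ``right'' side-edge in which $e_1$ has collapsed to $\tau(e_1)$. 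Iterating on the far face until every vertex is blocked leaves the central term $r^{\infty}\{\mathcal{V},\iota(e_2),e_1\}$, while the two accumulated families of side-edges are exactly the relative flows of Definition~\ref{def:rflow}: the edge $e_1$ still governs which vertex is blocked (so the descent order matches the far-face dynamics), yet each side-edge carries only the endpoint $\iota(e_1)$ or $\tau(e_1)$ in place of $e_1$ --- precisely as relative flow is defined. Passing to $M^{\infty}$ and applying Lemma~\ref{lemma:redundant} then gives
\[
M^{\infty}(F)=c(\mathcal{V},\iota(e_2);\iota(e_1))\;r^{\infty}\{\mathcal{V},\iota(e_2),e_1\}\;c(\mathcal{V},\iota(e_2);\tau(e_1))^{-1},
\]
and the three remaining faces are handled identically, with the roles of $e_1,e_2$ and of $\iota,\tau$ permuted according to which edge is collapsed.

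Finally I would assemble the four expressions. Each face contributes its surviving central term as one of the four corners of the diamond in Figure~\ref{fig:biggie}, and the two faces meeting at a given corner contribute the two cost-labels decorating the adjacent side; reading the freely reduced concatenation of the four face-flows around $\partial c$ reproduces the word displayed in Figure~\ref{fig:biggie}. The claimed bijection between relators and critical $2$-cells is then immediate from Theorem~\ref{thm:group}. The exact evaluation of the corners --- as critical $1$-cells, or their vanishing when $e_i$ turns order-respecting partway through the flow --- and of the costs is deferred to Propositions~\ref{prop:vanishing} and~\ref{prop:cost}.

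I expect the main obstacle to be the inductive step of the second paragraph. One must verify that throughout the flow of a face the matching always acts by expanding a vertex rather than by moving the persistent edge $e_i$ (so the recursion is well defined, terminating either at $r^{\infty}\{\cdots\}$ or, in the vanishing case, at a collapsible far-face that $M^{\infty}$ deletes), that the minimal unblocked vertex at each stage is the one prescribed by the relative-flow recursion, and that the eight spun-off side-paths carry compatible orientations so that adjacent faces genuinely share their costs. These are bookkeeping checks driven entirely by Definition~\ref{def:critical}, Definition~\ref{def:rflow}, and Lemma~\ref{lemma:redundant} rather than by any new idea; the one conceptual point is the identification --- forced by the definition of relative flow --- of the accumulated side-paths with $c(\cdot\,;\cdot)$ and of the persistent far-face with $r^{\infty}\{\cdot\}$.
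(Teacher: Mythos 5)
Your proposal follows the paper's own proof essentially step for step: both reduce everything to Theorem \ref{thm:group}, split $\partial c$ into its four faces, and prove the key identity $M^{\infty}(\mathrm{face}) = (\mathrm{cost})\cdot r^{\infty}(\mathrm{face})\cdot(\mathrm{cost})^{-1}$ by induction on the number of unblocked vertices, using the fact that non-order-respectingness of the surviving edge is inherited (so the face is never collapsible) and identifying the accumulated side-edges of the simple-homotopy pushes with the relative flow of Definition \ref{def:rflow}. There is no substantive difference in approach.
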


\begin{proof}
We need only show that the critical $2$-cell $\{ v_{1}, \ldots, v_{n-2}, e_{1}, e_{2} \}$ corresponds to the relator appearing in Figure \ref{fig:biggie}; the rest of the statement is a corollary of Theorem \ref{thm:group}.

Let us consider the $1$-cell $\{ v_{1}, \ldots, v_{n-2}, \iota(e_{1}), e_{2} \}$.  We wish to show that
	\begin{align}
	M^{\infty} \{ \mathcal{V}, \iota(e_{1}), e_{2} \} 
	&= c(\mathcal{V}, \iota(e_{1}); \iota(e_{2})) r^{\infty} \{\mathcal{V}, \iota(e_{1}), e_{2} \}
	c(\mathcal{V}, \iota(e_{1}), \tau(e_{2}))^{-1}. \tag{*} \label{ast}
	\end{align}
(The right side of the latter equation labels the left arm of the plus sign in Figure \ref{fig:biggie}.)
We begin by noting that $\{\mathcal{V}, \iota(e_{1}), e_{2} \}$ cannot be collapsible.  Indeed, since $\{\mathcal{V}, e_{1}, e_{2} \}$ is critical, it must be that $e_{2}$ is non-order-respecting, so either i)  $e_{2}$ is not contained in the maximal tree $T$, or ii) $e_{2} \subseteq T$, and there is a vertex $v$ such that $\tau(e_{2}) < v < \iota(e_{2})$, and $e(v) \cap e_{2} = \tau(e_{2})$.  Moreover, both of these conditions are inherited by $\{ v_{1}, \ldots, v_{n-2}, \iota(e_{1}), e_{2} \}$.

It follows that $\{\mathcal{V}, \iota(e_{1}), e_{2} \}$ is either critical or redundant.  We assume first that $\{\mathcal{V}, \iota(e_{1}), e_{2} \}$ is critical.  In this case, each vertex in $\{\mathcal{V}, \iota(e_{1}), e_{2} \}$ is blocked, so, by definition, $f(\mathcal{V}, \iota(e_{1}); \iota(e_{2}))$ and $f(\mathcal{V}, \iota(e_{1}); \tau(e_{2}))$ are both trivial paths, $r^{\infty}\{\mathcal{V}, \iota(e_{1}), e_{2} \} = \{\mathcal{V}, \iota(e_{1}), e_{2} \}$, and $M^{\infty} \{\mathcal{V}, \iota(e_{1}), e_{2} \} = \{\mathcal{V}, \iota(e_{1}), e_{2} \}$.  It follows that \eqref{ast} is satisfied in the current case.

Now suppose that $\{\mathcal{V}, \iota(e_{1}), e_{2} \}$ is redundant, and so has an unblocked vertex.  We write $\iota(e_{1}) = v_{n-1}$, and let $v_{i}$ denote the smallest of all unblocked vertices in $\{ v_{1}, \ldots, v_{n-1}, e_{2} \}$.  (We note that $v_{i}$ could be $v_{n-1}$, i.e., $\iota(e_{1})$.)   
We let 
	\begin{align*}
	\widetilde{\mathcal{V}} &= \{ v_{1}, \ldots, v_{n-1} \}, \\
	e(\widetilde{\mathcal{V}}) &= \{ v_{1}, \ldots, e(v_{i}), \ldots, v_{n-1} \}, \\
	\tau(e(\widetilde{\mathcal{V}})) &= \{ v_{1}, \ldots, \tau(e(v_{i})), \ldots, v_{n-1} \}.
	\end{align*}
By definition 
	$$ \{ \widetilde{\mathcal{V}}, e_{2} \} 
	\rightarrow \{ e(\widetilde{\mathcal{V}}), \iota(e_{2}) \} r \{ \widetilde{\mathcal{V}}, e_{2} \}
	\{ e(\widetilde{\mathcal{V}}), \tau(e_{2}) \}^{-1}.$$
Now we apply $M^{\infty}$ to both sides.  By induction, we have that $M^{\infty} \{ \widetilde{\mathcal{V}}, e_{2} \}$ is equal to 
	$$M^{\infty} \{ e(\widetilde{\mathcal{V}}), \iota(e_{2}) \} c(\tau(e(\widetilde{\mathcal{V}})); 
	\iota(e_{2}) ) r^{\infty} \{ \widetilde{\mathcal{V}}, e_{2} \}
	c( \tau(e(\widetilde{\mathcal{V}})); \tau(e_{2}) )^{-1}
	M^{\infty}\{ e(\widetilde{\mathcal{V}}), \tau(e_{2}) \}^{-1}.$$
Finally, we note that
	\begin{align*}
	M^{\infty}\{ e(\widetilde{\mathcal{V}}), \iota(e_{2}) \} 
	c( \tau(e(\widetilde{\mathcal{V}})); \iota(e_{2}))  &=  
	c(\widetilde{V}; \iota(e_{2})) \\   
	M^{\infty}\{ e(\widetilde{\mathcal{V}}), \tau(e_{2}) \} 
	c( \tau(e(\widetilde{\mathcal{V}})); \tau(e_{2}))  &=  
	c( \widetilde{V}; \tau(e_{2}))      
	\end{align*}
Condition \eqref{ast} follows directly.  One applies the same reasoning to each of the three remaining faces of $\{ v_{1}, \ldots, v_{n-2}, e_{1}, e_{2} \}$.
\end{proof}

\begin{example}
We let $\Gamma$ be the tree from Figure \ref{fig:classic}, and let $n=4$.  It is possible to show that the space $\ud{4}{\Gamma}$ has $24$ critical $1$-cells and $6$ critical $2$-cells.  We will compute the relator corresponding to the critical $2$-cell $\{ 13, [12, 16], 10, [9,19] \}$.

We compute each of the eight costs.
\begin{enumerate}

\item We compute $f(10,13,16;9)$:
	$$f(10,13,16;9) = \{ 9, 10, [11,13], 16 \}\{ 9, 10, 11, [12,16] \}.$$
Lemma \ref{lemma:redundant}(2) implies that all of the edges in this edge-path flow to $1$, so $c(10,13,16;9)=1$.

\item The flow $f(10,13,16;19)$ is equal to $f(10,13,16;9)$ (but with $19$ in place of $9$), 
so $c(10,13,16;19) = c(10,13,16;9) =1$.  The costs $c(10,12,13;9)$, $c(10,12,13;19)$, $c(9,10,13;16)$, and $c(9,10,13;12)$ are trivial also, since the corresponding flows are again made up of edges satisfying the hypotheses of Lemma \ref{lemma:redundant}(2).

\item We compute the relative flow $f(10,13,19;16)$:
	$$ f(10,13,19;16) = \{ [\ast, 10], 13, 16, 19 \} \{ \ast, [1,19], 13, 16 \}.$$
All of the edges in this edge-path satisfy the hypotheses of Lemma \ref{lemma:redundant}(2), with the exception of $\{ \ast, [9,19], 13, 16 \}$. If we apply Lemma \ref{lemma:redundant}(1) to the latter edge repeatedly, we eventually conclude that
	$$ M^{\infty}\{ \ast, [9,19], 13, 16 \} = \{ \ast, [9,19], 10, 11 \}.$$
Since all of the other edges in $f(10,13,19;16)$ flow to $1$, we have $c(10,13,19;16) = \{ \ast [9,19], 10, 11 \}$.  Exactly the same argument shows that $c(10,13,19;12) = \{ \ast, [9,19], 10, 11 \}$.
\end{enumerate}

It is easy to see that
	\begin{align*}
	r^{\infty}\{ 13, 16, 10, [9,19] \} &= \{ 10, 11, 12, [9,19] \}; \\ 
	r^{\infty} \{ 13, 12, 10, [9,19] \} &= \{ 10, 11, 12, [9,19] \};  \\              
	r^{\infty}\{ 13, [12,16],  10, 9 \} &= \{ \ast, 1, 13, [12,16] \}; \\
	r^{\infty} \{ 10, 13, [12,16], 19 \}  &= \{ \ast, 1, 13, [12,16] \}.
	\end{align*}
We therefore arrive at the following commutator relation, by Theorem \ref{thm:biggie}:
	$$ \left[ \{ \ast, 1, 13, [12,16] \}, \{ 10, 11, 12, [9,19] \}^{-1} \{ \ast, [9,19], 10, 11 \} \right].$$
\end{example}

\subsection{Calculation of costs}

In the previous example, at least one of the costs at any corner of a given critical $2$-cell was trivial.  This is true under fairly general hypotheses.

\begin{assumption} \label{assumption}
Let $n \in \mathbb{N}$.  Assume that $\Gamma$ is subdivided and the maximal tree $T \subseteq \Gamma$ is chosen in such a way that:
	\begin{enumerate}
	\item Each vertex of degree $1$ in $T$ has degree at most $2$ in $\Gamma$, and
	\item $T$ is sufficiently subdivided for $n$.
	\end{enumerate}
\end{assumption}

\begin{proposition} \label{prop:vanishing}(Vanishing Costs) 
Assume that $\Gamma$ and $T$ satisfy Assumption \ref{assumption} for $n$.  Let $c = \{ v_{1}, \ldots, v_{n-2}, e_{1}, e_{2} \}$ be a critical $2$-cell in $\ud{n}{\Gamma}$.  Let $\alpha, \beta \in \{ \iota, \tau \}$.  One of the costs $c(v_{1}, \ldots, v_{n-2}, \alpha(e_{1}); \beta(e_{2}))$, $c(v_{1}, \ldots, v_{n-2}, \beta(e_{2}); \alpha(e_{1}))$ is trivial.  More specifically, $c( v_{1}, \ldots, v_{n-2}, \alpha(e_{1}); \beta(e_{2})) = 1$ in all of the following cases.
\begin{enumerate}
\item $e_{1}$, $e_{2} \subseteq T$ and
	\begin{enumerate}
	\item $\tau(e_{2}) < \tau(e_{1}) < \iota(e_{1}) < \iota(e_{2})$, or
	\item $\tau(e_{1}) < \iota(e_{1}) < \tau(e_{2}) < \iota(e_{2})$.
	\end{enumerate}
\item $e_{1}$, $e_{2} \not \subseteq T$ and $\alpha(e_{1}) < \beta(e_{2})$.
\item $e_{1} \not \subseteq T$, $e_{2} \subseteq T$, and
	\begin{enumerate}
	\item $\alpha(e_{1}) \wedge \tau(e_{2}) = \tau(e_{2})$, or
	\item $\alpha(e_{1}) < \tau(e_{2})$.
	\end{enumerate}
\item $e_{1} \subseteq T$,
$e_{2} \not \subseteq T$, and there is some vertex $v \in T$ such that $0 < d(v, \tau(e_{1})) < d(v, \beta(e_{2}))$.
\end{enumerate}
\end{proposition}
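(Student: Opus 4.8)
The plan is to start from the identity $c(\mathcal{V},\alpha(e_{1});\beta(e_{2})) = M^{\infty}(f(\mathcal{V},\alpha(e_{1});\beta(e_{2})))$ of Definition \ref{def:rflow} and reduce everything to a single combinatorial condition. By Note \ref{note:intuitive}, the relative flow is a concatenation of redundant $1$-cells, one for each single step in which the smallest currently-unblocked vertex $v$ of the configuration $\{v_{1},\dots,v_{n-2},\alpha(e_{1}),e_{2}\}$ is pushed one edge toward $\ast$; each such $1$-cell has the form $\{w_{1},\dots,w_{n-2},\beta(e_{2}),e(v)\}$ with $e(v)\subseteq T$. Since reduced forms are unique, an edge-path reduces to the empty word as soon as each of its constituent $1$-cells does, so it suffices to show that every one of these single-step cells reduces to $1$. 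By Lemma \ref{lemma:redundant}(2) (whose hypothesis $e(v)\subseteq T$ is automatic) this happens precisely when none of the remaining vertices $w_{1},\dots,w_{n-2},\beta(e_{2})$ lies in the open order-interval $(\tau(e(v)),v)$. Thus the entire proposition reduces to the claim: under the hypotheses of each case, at no stage of the relative flow does a configuration vertex occupy the interval $(\tau(e(v)),v)$ of the vertex $v$ about to move.

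Next I would record the geometric content of this interval. Since $v=\iota(e(v))$ is a direct child in $T$ of $p:=\tau(e(v))$ and the numbering is the clockwise depth-first traversal, a vertex $u$ lies in $(p,v)$ exactly when $u$ belongs to a branch issuing from $p$ whose direction is strictly smaller than $d(p,v)$, i.e.\ to a sibling subtree of $v$ visited earlier. Because the flow always advances the smallest unblocked vertex and all motion is monotone toward $\ast$, I would track the positions of the moving vertices stage by stage and verify this ``no earlier sibling'' condition throughout. The hypotheses translate directly: in case (1) the nesting/disjointness inequalities on $\tau(e_{i}),\iota(e_{i})$ force every moving vertex to pass through a parent none of whose earlier branches still holds a configuration vertex; in case (2) the inequality $\alpha(e_{1})<\beta(e_{2})$ puts the blocking deleted edge $e_{2}$ ahead of everything that moves, so nothing is trapped behind it; and in cases (3) and (4) the meet condition $\alpha(e_{1})\wedge\tau(e_{2})=\tau(e_{2})$ (respectively the direction inequality $0<d(v,\tau(e_{1}))<d(v,\beta(e_{2}))$) is exactly the assertion that the fixed endpoint and the tree edge sit in a branch ordered after the moving vertices, again precluding trapping.

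To obtain the opening assertion that at least one of $c(\mathcal{V},\alpha(e_{1});\beta(e_{2}))$ and $c(\mathcal{V},\beta(e_{2});\alpha(e_{1}))$ is trivial, I would argue that the four cases, together with their mirror images under the interchange of $(e_{1},\alpha)$ and $(e_{2},\beta)$, exhaust every critical $2$-cell with a choice of endpoints. When both edges lie in $T$, the preorder makes each subtree an interval, so the two edges are nested or disjoint and the interleaved ordering $\tau(e_{1})<\tau(e_{2})<\iota(e_{1})<\iota(e_{2})$ cannot occur; hence case (1) splits only into subcases (1a) and (1b) and, after the mirror, covers all tree-edge configurations. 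When both are deleted, $\alpha(e_{1})\neq\beta(e_{2})$ (disjoint closures), so either $\alpha(e_{1})<\beta(e_{2})$ or the reverse, and case (2) or its mirror applies. The mixed situations are handled by cases (3), (4) and their mirrors. In each instance the relevant one of the two costs falls under a verified case, giving triviality.

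I expect the main obstacle to be the bookkeeping in the mixed cases (3) and (4). When one of $e_{1},e_{2}$ is a deleted edge, ``blocking'' is governed by incidence of $e(v)$ with the deleted edge rather than by the linear order, so the monotone-flow argument must be combined with a careful translation of the meet and direction hypotheses into the branch language above. Making the induction on the number of remaining unblocked vertices interact correctly with the permanent blocking caused by the deleted edge, and confirming that no vertex is ever stranded in an earlier sibling branch at any intermediate stage, is the delicate part; by contrast, the pure-tree subcases (1a) and (1b) should be comparatively routine once the interval reformulation of Lemma \ref{lemma:redundant}(2) is in place.
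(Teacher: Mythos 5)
Your proposal follows essentially the same route as the paper's proof: reduce the cost to the product of $M^{\infty}$ applied to the individual $1$-cells of the relative flow, dispose of each via Lemma \ref{lemma:redundant}(2) (or by observing collapsibility) once one checks that no configuration vertex sits in the interval $(\tau(e(v)),\iota(e(v)))$ of the edge being crossed, and obtain the ``one of the two costs is trivial'' claim by showing the four cases and their mirrors under swapping $(e_{1},\alpha)\leftrightarrow(e_{2},\beta)$ are exhaustive. The case-by-case verifications you defer are exactly the ones the paper carries out, and your one-line justifications (vertices blocked by $e_{1}$ end up blocked by $e_{2}$ or at $\ast$ in case (1), everything collapsible in cases (2)--(4)) match the paper's arguments; the only nitpick is that Lemma \ref{lemma:redundant}(2) gives a sufficient, not a ``precise,'' criterion, which is all you actually use.
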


\begin{proof}
We first assume that $e_{1}$, $e_{2} \subseteq T$.  In this case, since all vertices in $c$ are blocked, each vertex is either blocked by $e_{1}$, blocked by $e_{2}$, or blocked at the basepoint.  (Here we mean that a vertex $v$ is blocked by $e_{1}$ if $v$ is either blocked by $e_{1}$ in the usual sense, or that $v$ is blocked by a vertex that is blocked by $e_{1}$, etc.)  

Assume that $\tau(e_{2}) < \tau(e_{1}) < \iota (e_{1}) < \iota (e_{2})$.  We note first that the vertex $\alpha(e_{1})$ and each vertex blocked by $e_{1}$ will eventually be blocked by $e_{2}$ after it has moved (possibly several times) by our assumption.  It follows that the relative flow $f(v_{1}, \ldots, v_{n-2}, \alpha(e_{1}); \beta(e_{2}))$ is the edge-path determined by moving each of the vertices blocked at $e_{1}$ and $\alpha(e_{1})$ in order, until all are blocked at $e_{2}$. Each edge $\{ v'_{1}, \ldots, v'_{n-2}, \beta(e_2), e \}$ in this edge-path has the property that $\{ v'_{1}, \ldots, v'_{n-2}, \beta(e_2) \} \cap ( \tau(e), \iota(e) ) = \emptyset$.  It follows that $M^{\infty} \{ v'_{1}, \ldots, v'_{n-2}, \beta(e_2), e \} = 1$, and so $c(v_{1}, \ldots, v_{n-2}, \alpha(e_{1}); \beta(e_{2})) =1$, as claimed.

Assume now that $\tau(e_{1}) < \iota(e_{1}) < \tau(e_{2}) < \iota(e_{2})$.  In this case, the vertex $\alpha(e_{1})$ and each of the vertices blocked by $e_{1}$ will eventually be blocked at the basepoint after moving several times.  The relative flow $f(v_{1}, \ldots, v_{n-2}, \alpha(e_{1}); \beta(e_{2}))$ is the edge-path determined by moving $\alpha(e_{1})$ and each of the vertices blocked by $e_{1}$ in order until all are blocked at the basepoint.  Each edge $\{ v'_{1}, \ldots, v'_{n-2}, \beta(e_2), e \}$ in this edge-path has the property that $\{ v'_{1}, \ldots, v'_{n-2}, \beta(e_2) \} \cap (\tau(e), \iota(e)) = \emptyset$ (as before). (Indeed, each edge $\{ v'_{1}, \ldots, v'_{n-2}, \beta(e_2), e \}$ must in fact be collapsible, in contrast with the situation covered in the previous case.)  It follows again that $c(v_{1}, \ldots, v_{n-2}, \alpha(e_{1}); \beta(e_{2})) = 1$. This proves (1).

Now we assume that $e_{1}$, $e_{2} \not \subseteq T$ and $\alpha(e_{1}) < \beta(e_{2})$.  Since $\{ v_{1}, \ldots, v_{n-2}, e_{1}, e_{2} \}$ is critical.  Because $\Gamma$ and $T$ satisfy Assumption \ref{assumption}, it must be that all of the vertices $\{ v_{1}, \ldots, v_{n-2} \}$ are blocked at the basepoint $\ast$.  The relative flow $f(v_{1}, \ldots, v_{n-2}, \alpha(e_{1}); \beta(e_{2}))$ is the edge-path obtained by starting at the vertex $\{ v_{1}, \ldots, v_{n-2}, \alpha(e_{1}), \beta(e_{2}) \}$ and repeatedly moving the vertex $\alpha(e_{1})$ until it is blocked at the basepoint.  All of the edges in this path are collapsible, so $c(v_{1}, \ldots, v_{n-2}, \alpha(e_{1}); \beta(e_{2})) = 1$.  This proves (2).

Now assume that $e_{1} \not \subseteq T$ and $e_{2} \subseteq T$.  Assume also that $\alpha(e_{1}) \wedge \tau(e_{2}) = \tau(e_{2})$, as in (3a).  In this case, the vertex $\{ v_{1}, \ldots, v_{n-2}, \alpha(e_{1}), \beta(e_{2}) \}$ consists of a collection of vertices blocked by $e_{2}$, a (possibly empty) collection of vertices blocked at $\ast$, and the vertices $\alpha(e_{1})$ and $\beta(e_{2})$.  The assumption $\alpha(e_1) \wedge \tau(e_2) = \tau(e_2)$ implies that $\alpha(e_{1})$ will eventually be blocked by $e_{2}$ (possibly after moving several times).  As $\alpha(e_1)$ is the only unblocked vertex, the relative flow $f(v_{1}, \ldots, v_{n-2}, \alpha(e_{1}); \beta(e_{2}))$ is the edge-path starting at $\{ v_{1}, \ldots, v_{n-2}, \alpha(e_{1}), \beta(e_{2}) \}$ which is determined by repeatedly moving $\alpha(e_{1})$ until it is blocked by $e_{2}$.  Each edge in this edge-path is collapsible, so $c(v_{1}, \ldots, v_{n-2}, \alpha(e_{1}); \beta(e_{2})) = 1$.

Assume that $e_{1} \not \subseteq T$, $e_{2} \subseteq T$, and $\alpha(e_{1}) < \tau(e_{2})$, as in (3b).  In this case, all of the vertices $\{ v_{1}, \ldots, v_{n-2} \}$ are either blocked by $e_{2}$, or blocked at the basepoint.  Since $\alpha(e_1) < \tau(e_2)$, the vertex $\alpha(e_1)$ can be moved repeatedly until it is blocked at the basepoint.  The relative flow $f(v_{1}, \ldots, v_{n-2}, \alpha(e_{1}); \beta(e_{2}))$ is the edge-path beginning at $\{ v_{1}, \ldots, v_{n-2}, \alpha(e_{1}), \beta(e_{2}) \}$ which can be obtained by moving $\alpha(e_{1})$ repeatedly until it is blocked at the basepoint.  All of the edges in this path are collapsible,  so $c(v_{1}, \ldots, v_{n-2}, \alpha(e_{1}); \beta(e_{2})) = 1$.  This prove (3).

Finally, we consider case (4):  $e_{1} \subseteq T$, $e_{2} \not \subseteq T$, and there is some vertex $v \in T$ such that 
$0 < d(v, \tau(e_{1})) < d(v, \beta(e_{2}))$.  
The vertices $\{ v_{1}, \ldots, v_{n-2} \}$ are all either blocked by $e_{1}$, or blocked at $\ast$.  It follows from our assumptions that each vertex in $\{ v_{1}, \ldots, v_{n-2}, \alpha(e_{1}) \}$ is less than $\beta(e_{2})$, so $f( v_{1}, \ldots, v_{n-2}, \alpha(e_{1}); \beta(e_{2}) )$ consists entirely of collapsible edges.  Therefore $c( v_{1}, \ldots, v_{n-2}, \alpha(e_{1}); \beta(e_{2}) ) = 1$, proving (4).

The main statement, that one of $c(v_{1}, \ldots, v_{n-2}, \alpha(e_{1}); \beta(e_{2}))$ or $c(v_{1}, \ldots, v_{n-2}, \beta(e_{2}); \alpha(e_{1}))$ is trivial, follows, because if $\{ v_{1}, \ldots, v_{n-2}, e_{1}, e_{2} \}$ fails to satisfy any of the conditions (1)-(4), then it must satisfy one of the conditions (1')-(4'), where (1')-(4') are the conditions that are be obtained from (1)-(4) by reversing the roles of $e_{1}$ and $e_{2}$ (and of $\alpha(e_{1})$ and $\beta(e_{2})$).
\end{proof}   

\begin{note}
In Proposition \ref{prop:cost}, certain of the costs, 
which are necessarily words in the alphabet of oriented critical $1$-cells, are described using both collapsible and 
critical $1$-cells.  This is a slight abuse of notation.  In the event that a cost contains collapsible $1$-cells among
its letters, then one should apply the collapsing rule from Subsection \ref{sec:fundgp} to remove each such $1$-cell
from the final form of the cost.
\end{note}

\begin{proposition} \label{prop:cost}
(Costs $n \geq 2$)  Assume that $c = \{ v_{1}, \ldots, v_{n-2}, e_{1}, e_{2} \}$ is a critical $2$-cell.  We frequently write $\mathcal{V}$ in place of $\{v_{1}, \ldots, v_{n-2} \}$.   Let $\alpha, \beta \in \{ \iota, \tau \}$.  Let $A = \tau(e_{1})$ and $B = \tau(e_2)$, and let $i$ be the direction from $A$ along $e_1$ and $j$ the direction from $B$ along $e_2$.

\begin{enumerate}
\item Assume $e_{1}, e_{2} \not\subset T$.  

\begin{enumerate}
\item If $\alpha(e_{1}) < \beta(e_{2})$, then
	$$c(\mathcal{V}, \alpha(e_{1}); \beta(e_{2})) = 1.$$
\item If $\alpha(e_{1}) > \beta(e_{2}) \neq \ast$, then
	$$ \smash{c(\mathcal{V}, \alpha(e_{1}); \beta(e_{2})) = C_{j}[ \delta_{i} + \delta_{j}],}$$
where $C = \alpha(e_{1}) \wedge \beta(e_{2})$, $i = d(C, \beta(e_{2}))$, and $j = d(C, \alpha(e_{1}))$.  
\item If $\beta(e_{2}) = \ast$, then
	$$c(\mathcal{V}, \alpha(e_{1}); \beta(e_{2})) = 1.$$
\end{enumerate}

\item Assume $e_1, e_2 \subset T$.  Then $c$ has the form $A_{i}[\vec{a}] + B_{j}[\vec{b}]$.

\begin{enumerate}
\item If $A \wedge B = A$, then 
	\begin{align*}
	c(\mathcal{V}, \iota(e_{1}); \beta(e_{2})) 
	&= \prod_{\ell = 0}^{|\vec{a}| - 1}  A_{d}[ (\vec{a} - \ell) + |\vec{b}|\delta_{d(A,B)}]; \\
	c(\mathcal{V}, \tau(e_{1}); \beta(e_{2})) 
	&=  \prod_{\ell = 0}^{|\vec{a}| - 2}  A_{d}[ ((\vec{a} - \delta_{i}) - \ell) + |\vec{b}|\delta_{d(A,B)}].
	\end{align*}
Here $d$ is the place of the first non-zero coordinate in $(\vec{a} - \delta_{i}) - \ell$ (in the first case) or in $\vec{a} - \ell$ (in the second case).

\item If $A \wedge B = C$ (where $C \not \in \{ A, B \}$), and $A > B$, then
	\begin{align*}
	c(\mathcal{V}, \alpha(e_{1}); \beta(e_{2})) 
	&= \prod_{\ell=0}^{|\vec{a}| - 1}  C_{d(C,A)} [(|\vec{a}| - \ell)\delta_{d(C,A)}+|\vec{b}| \delta_{d(C,B)}].
	\end{align*}
\item If $A \wedge B = B$ or if $A \wedge B = C$ and $A < B$, then
	$$c(\mathcal{V}, \alpha(e_{1}); \beta(e_{2})) = 1.$$

\end{enumerate}

\item Assume $e_1 \not\subset T$ and $e_2 \subset T$.  Then $c$ has the form $e_{1} + B_{j}[\vec{b}]$.

\begin{enumerate}
\item If $\alpha(e_{1}) \wedge B = C \neq B$ and $\alpha(e_{1}) > B$, then
	\begin{align*}
	c(\mathcal{V}, \alpha(e_{1}); \beta(e_{2})) 
	&= C_{d(C,\alpha(e_{1}))}[ |\vec{b}| \delta_{d(C,B)} + \delta_{d(C,\alpha(e_{1}))}].
	\end{align*}
\item If $\alpha(e_1) \wedge B = B$ or $\alpha(e_1) < B$, then
	$$c(\mathcal{V}, \alpha(e_{1}); \beta(e_{2})) = 1.$$
\end{enumerate}

\item Assume $e_1 \subset T$ and $e_2 \not\subset T$.  Then $c$ has the form $A_{i}[ \vec{a}] + e_{2}$.

\begin{enumerate}
\item If $A \wedge \beta(e_{2}) = A$, then
	\begin{align*}
	c(\mathcal{V}, \tau(e_{1}); \beta(e_{2})) 
	& =  \prod_{\ell = 0}^{|\vec{a}| - 2}  A_{d}[ ((\vec{a} - \delta_{i}) - \ell) + \delta_{d(A,\beta(e_{2}))}];  \\
	c(\mathcal{V}, \iota(e_{1}); \beta(e_{2})) 
	& = \prod_{\ell = 0}^{|\vec{a}| - 1}  A_{d}[ (\vec{a} - \ell) + \delta_{d(A,\beta(e_{2}))}].  
	\end{align*}
Here again $d$ is the place of the first non-zero coordinate in $(\vec{a} - \delta_{i}) - \ell$ (in the first case) or in $\vec{a} - \ell$ (in the second case).

\item If $A \wedge \beta(e_{2}) = C \neq A$ and $A > \beta(e_{2})$, then
	\begin{align*}
	c(\mathcal{V}, \alpha(e_{1}); \beta(e_{2})) 
	&= \prod_{\ell = 0}^{|\vec{a}| - 1} C_{d(C,A)}[ (|\vec{a}| - \ell)\delta_{d(C,A)} 
		+ \delta_{d(C, \beta(e_{2}))}].
	\end{align*} 

\item If $A \wedge \beta(e_2) = C \neq A$ and $A < \beta(e_2)$, then
	$$c(\mathcal{V}, \alpha(e_{1}); \beta(e_{2})) = 1.$$

\end{enumerate}
\end{enumerate}
\end{proposition}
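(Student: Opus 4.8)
The plan is to compute each cost directly from its definition $c(\mathcal{V},\alpha(e_1);\beta(e_2)) = M^\infty(f(\mathcal{V},\alpha(e_1);\beta(e_2)))$, using the concrete description of the relative flow provided by Note \ref{note:intuitive}: starting from the vertex $\{\mathcal{V},\alpha(e_1),\beta(e_2)\}$, one repeatedly moves the smallest unblocked vertex one step toward $\ast$, holding $\beta(e_2)$ fixed and letting $e_2$ block. This produces an explicit edge-path, and $M^\infty$ is then applied one $1$-cell at a time. The two parts of Lemma \ref{lemma:redundant} do all the work: part (2) sends to $1$ every $1$-cell whose moving edge lies in $T$ and encloses no other vertex of the configuration, while part (1) reduces the remaining redundant $1$-cells. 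Thus each cost is the ordered product of those $1$-cells in the flow that survive as critical cells, and the entire problem becomes one of identifying these survivors and rewriting them in the vector notation of Subsection \ref{subsec:prelim}.

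First I would dispose of the vanishing subcases (1a), (1c), (2c), (3b), and (4c): each of these is exactly one of the hypotheses of Proposition \ref{prop:vanishing}, so the cost is $1$ with no further argument. Next I would treat the single-mobile-vertex subcases (1b) and (3a). Here criticality of $c$ together with Assumption \ref{assumption} forces every vertex of $\mathcal{V}$ to sit at the basepoint, so only $\alpha(e_1)$ moves; it descends toward $\ast$ until it is blocked at the meet $C = \alpha(e_1)\wedge\beta(e_2)$. Every $1$-cell of this descent flows to $1$ by Lemma \ref{lemma:redundant}(2) except the last one before blocking, which is a non-order-respecting critical cell. Reading this single cell off in vector notation yields $C_j[\delta_i+\delta_j]$ in (1b) and $C_{d(C,\alpha(e_1))}[\,|\vec{b}|\delta_{d(C,B)}+\delta_{d(C,\alpha(e_1))}\,]$ in (3a).

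The substance of the proof is the multi-vertex subcases (2a), (2b), (4a), and (4b), which I would prove by induction on $|\vec{a}|$, tracking the recursive clause of Definition \ref{def:rflow}. At the first inductive step I flow the smallest mobile vertex all the way down by iterating that clause; along the way all but one of its $1$-cells die by Lemma \ref{lemma:redundant}(2), and the surviving one is the $\ell=0$ factor of the stated product. The remaining tail of the flow is then the relative flow of the configuration with that vertex removed, i.e.\ with $\vec{a}$ replaced by $\vec{a}-1$, and the inductive hypothesis supplies the factors $\ell=1,\dots,|\vec{a}|-1$. The variant with $\alpha=\tau$ is handled identically, except that $\tau(e_1)=A$ already occupies $A$, so there is one fewer vertex to move; this accounts both for the upper limit $|\vec{a}|-2$ and for the initial shift $\vec{a}-\delta_i$. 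When $A\wedge B = A$ (cases 2a, 4a) the mobile vertices stream past $A$ and each surviving critical cell is based at $A$, whereas when $A\wedge B = C\notin\{A,B\}$ with $A>B$ (cases 2b, 4b) they descend to the branch vertex $C$, accumulate in direction $d(C,A)$, and every surviving cell is based at $C$. In the deleted-$e_2$ cases (4a, 4b) the single fixed vertex $\beta(e_2)$ plays the role of the $B$-block, contributing $\delta_{d(A,\beta(e_2))}$ in place of $|\vec{b}|\delta_{d(A,B)}$.

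The main obstacle will be the vector-notation bookkeeping in these inductive steps: verifying at each stage that the currently moving vertex occupies the direction $d$ equal to the first nonzero coordinate of the current vector ($\vec{a}-\ell$, or $(\vec{a}-\delta_i)-\ell$ in the $\tau$ case), that the stationary $B$-block contributes the term $|\vec{b}|\delta_{d(A,B)}$ to each surviving cell in (2a), and that the survivors appear in the product in the order dictated by the ``smallest unblocked vertex first'' rule. One further point I would check is that the descent of the smallest mobile vertex can indeed be carried out completely before the next vertex starts to move, so that the flow decomposes cleanly vertex-by-vertex. Particular care is needed because subtraction does not commute with vector addition: as flagged in Subsection \ref{subsec:prelim}, $(\vec{a}+\vec{b})-1\neq(\vec{a}-1)+\vec{b}$ in general, so one must subtract from $\vec{a}$ before adjoining the $B$-contribution, exactly as the stated formulas do. Once this accounting is set up consistently, each surviving $1$-cell is confirmed to be non-order-respecting and critical by the criterion recalled in Subsection \ref{subsec:prelim}, and the formulas follow.
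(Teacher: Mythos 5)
Your proposal follows essentially the same route as the paper: the vanishing subcases are delegated to Proposition \ref{prop:vanishing}, the single-mobile-vertex cases (1b) and (3a) are computed by locating the unique non-order-respecting edge of the descent and reducing it with Lemma \ref{lemma:redundant}, and the product formulas in (2) and (4) are obtained by peeling off the smallest mobile vertex and recursing on $\vec{a}-1$ (resp.\ $\vec{a}-\delta_{i}$), exactly as in the paper's iterative argument. The only slip is that subcase (1c) is not literally an instance of Proposition \ref{prop:vanishing} (its case (2) requires $\alpha(e_{1})<\beta(e_{2})$, which fails when $\beta(e_{2})=\ast$), but the one-line direct argument --- every edge of that flow is collapsible --- is already implicit in your general setup, and the paper disposes of that subcase the same way.
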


\begin{proof}

\begin{enumerate}
\item Subcase (a) is covered by Proposition \ref{prop:vanishing}.  For Subcase (b), we assume first that $\alpha(e_{1}) > \beta(e_{2})$ and that neither $e_{1}$ nor $e_{2}$ contains the basepoint $\ast$.  It follows that $c = \{ \ast, 1, 2, \ldots, n-3, e_{1}, e_{2} \}$.  The flow $f( \mathcal{V}, \alpha(e_{1}); \beta(e_{2}))$ is therefore simply the edge-path $\{ \ast, 1, \ldots, n-3, [n-2, \alpha(e_{1})], \beta(e_{2}) \}$.  The edge-path $[n-2, \alpha(e_{1})]$ (in $\Gamma$) contains a unique edge $e$ such that $\tau(e) < \beta(e_{2}) < \iota(e)$.  The only edge in $f(\mathcal{V}, \alpha(e_{1}); \beta(e_{2}))$ contributing to $c(\mathcal{V}, \alpha(e_{1}); \beta(e_{2}))$ is therefore $\{ \ast, 1, \ldots, n-3, e, \beta(e_{2}) \}$, as all other edges in $f(\mathcal{V}, \alpha(e_{1}); \beta(e_{2}))$ satisfy the hypotheses of Lemma \ref{lemma:redundant}(2).  We note that $\tau(e) = \alpha(e_{1}) \wedge \beta(e_{2}) = C$.  Now we apply the flow to $\{ \ast, 1, \ldots, n-3, e, \beta(e_{2}) \}$; since $\tau(e) < \beta(e_{2}) < \iota(e)$ and all other vertices are blocked, we may apply Lemma \ref{lemma:redundant}(1) repeatedly, moving $\beta(e_{2})$ until it is blocked by $e$.  The resulting cell is clearly $C_{j}[ \delta_{i} + \delta_{j}]$, and therefore $c(\mathcal{V}, \alpha(e_{1}); \beta(e_{2})) = C_{j}[\delta_{i} + \delta_{j}]$, as claimed.

The proof is essentially the same if one of $e_{1}$ and $e_{2}$ contain the basepoint.  We omit the argument.  

Finally, for Subcase (c), we assume $\beta(e_{2}) = \ast$.  Then $f(\mathcal{V}, \alpha(e_{1}); \beta(e_{2})) = \{ \ast, 1, \ldots, n-2, [n-1, \alpha(e_{1})] \}$, which is made up entirely of collapsible edges.  Therefore $c(\mathcal{V}, \alpha(e_{1}); \beta(e_{2})) = 1$ in this case as well.

\item We first consider the cost $c(\mathcal{V}, \iota(e_{1}); \beta(e_{2}))$ in Subcase (a).  The flow 
$f(\mathcal{V}, \iota(e_{1}); \beta(e_{2}))$ has the initial vertex $\{ v_{1}, \ldots, v_{n-2}, \iota(e_{1}), \beta(e_{2}) \}$. The elements of this vertex (all of which are vertices in $\Gamma$) are of two types.  First, there are $|\vec{b}| - 1$ vertices blocked by $e_{2}$, and $\beta(e_{2})$ itself.  Second, there are $|\vec{a}|$ vertices clustered around the essential vertex $A$, where the number of such vertices in the direction $i$ from $A$ is the $i$th component of $\vec{a}$.  The flow $f(\mathcal{V}, \iota(e_{1}); \beta(e_{2}))$ is determined by successively moving each of the latter vertices back to the basepoint, beginning with the smallest-numbered.

We suppose, without loss of generality, that $v_{1}$ is the smallest of the vertices clustered at $A$.  Note that $e(v_{1})$ is the edge pointing in direction $d$ from $A = \tau(e(v_{1}))$.  The edge-path $\{ [\ast, v_{1}], v_{2}, \ldots, v_{n-2}, \iota(e_{1}); \beta(e_{2}) \}$ must consist entirely of collapsible edges, except possibly  the first edge $c' = \{ e(v_{1}), v_{2}, \ldots, v_{n-2}, \iota(e_{1}), \beta(e_{2})\}$, by 
Lemma \ref{lemma:redundant}(2).  We assume for the moment that $c'$ is not collapsible.  It follows that there are vertices of $c'$ contained in the interval $(A, v_1)$.  The only possibility is that the set of all such vertices is precisely the set of $|\vec{b}|$ vertices clustered around $B$.  If we apply the flow to the edge $c'$, we therefore arrive at the cell
	\begin{align}
	A_{d}[ \vec{a} + |\vec{b}|\delta_{d(A,B)}]. \label{bigimportant}  
	\end{align}
This cell is critical under our current assumptions (i.e., that $c'$ is not collapsible), because we must have $d(A,B) < d$. If $c'$ is collapsible, then we must have $d(A,B) \geq d$.  This implies that \eqref{bigimportant} is trivial.  It follows therefore that $M^{\infty}(c')$ is equal to \eqref{bigimportant} in any case.  This is the first factor in the cost.

We now continue, moving the second vertex from the cluster at $A$.  We can apply the same reasoning as before, but this time we replace the vector $\vec{a}$ in the above reasoning with $\vec{a} - 1$.  It follows that the second factor in the cost is
	$$A_{d}[(\vec{a} - 1) + |\vec{b}|\delta_{d(A,B)}],$$
where $d$ is as defined in the theorem.  (We note that $d$ depends on $\vec{a} -1$.)  One repeats this reasoning, eventually arriving at the formula in the statement of the theorem.

We now consider the cost $c(\mathcal{V}, \tau(e_{1}); \beta(e_{2}))$.  The computation of this cost is exactly like that of $c(\mathcal{V}, \iota(e_{1}); \beta(e_{2}))$, except that the smallest vertex $\tau(e_{1}) = A$ can always be moved freely to the basepoint, contributing nothing to the cost, since $\tau(e_{1})$ is smaller than all of the vertices clustered around $B$ (and $A$).  This leaves us to repeat the calculation of the previous paragraphs, but beginning with $\vec{a} - \delta_{i}$ rather than $\vec{a}$.  This establishes the desired identity.

We now consider Subcase (b).  Since $C \neq A,B$ and $A > B$, it follows that $0 < d(C,B) < d(C,A)$.  The initial vertex of the flow $f(\mathcal{V}, \alpha(e_{1}); \beta(e_{2}))$ consists of $|\vec{a}|$ vertices clustered around $A$ and $|\vec{b}|$ vertices clustered around $B$.  The flow $f(\mathcal{V}, \alpha(e_{1}); \beta(e_{2}))$ is the edge-path determined by moving the vertices at $A$ back to the basepoint, while holding those at $B$ fixed.  

We let $v_{1} \in \mathcal{V} \cup \{ \alpha(e_{1}) \}$ be the smallest of the vertices clustered at $A$.  We move $v_{1}$ back to the basepoint $\ast$.  All of the edges along the resulting edge-path (in $\ud{n}{\Gamma}$) are collapsible, contributing nothing to the cost, with the sole exception of $c' = \{ e, v_{2}, \ldots, v_{n-1}, \beta(e_{2}) \}$, where $e$ is the edge pointing in direction $d(C,A)$ from $C$.  Now we repeatedly apply Lemma \ref{lemma:redundant}(1) to $c'$, to arrive at
	\begin{align}
	C_{d(C,A)}[ |\vec{a}| \delta_{d(C,A)} + |\vec{b}|\delta_{d(C,B)}].
	\end{align}
This is the first factor in $c(\mathcal{V}, \alpha(e_{1}); \beta(e_{2}))$.  Now we repeat this argument, replacing $|\vec{a}|$ with $|\vec{a}| - 1$ in the above reasoning.  We eventually arrive at the cost in the statement of the Theorem.  

\item The cost in (3a) is essentially identical to the cost in (2b) (simply replace $|\vec{a}|$ with $1$), and it is computed in essentially the same way.  Subcase (3b) follows from Proposition \ref{prop:vanishing}(3).

\item The formulas for cost in (4a) and (4b) are the same as those in (2a) and (2b) (respectively) if we let $|\vec{b}| = 1$, and they are proved in a directly analogous way.  Subcase (4c) follows from Proposition \ref{prop:vanishing}(4).

\end{enumerate}
\end{proof}

\section{The Case of Two Strands}\label{sec:morse2pres}

We let $n=2$, and continue to assume that $\Gamma$ and $T$ satisfy Assumption \ref{assumption}.  
This means simply that $\Gamma$ is a simplicial graph, and that $T$ is a maximal tree in $\Gamma$ such that each vertex having degree $1$ in $T$ has degree at most $2$ in $\Gamma$.  We begin by describing the costs $c(v_{1}; v_{2})$.

\begin{proposition} \label{prop:nistwo}
(Costs in the case $n=2$)  Let $n=2$ and assume that $\Gamma$,  $T$ satisfy Assumption \ref{assumption}.  Assume that $v_{1}$ and $v_{2}$ are endpoints of deleted edges.  We let $A = v_{1} \wedge v_{2}$, and let $d(A, v_{1}) = i$ and $d(A, v_{2}) = j$.
\begin{enumerate}
\item If $v_{1} < v_{2}$ or if $v_{2} = \ast$, then $c(v_{1}; v_{2}) = 1$.
\item If $v_{1} > v_{2}$ and $v_{2} \neq \ast$, then $c(v_{1}; v_{2}) = A_{i}[\delta_{i} + \delta_{j}]$.
\end{enumerate}
\end{proposition}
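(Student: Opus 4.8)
The plan is to recognize Proposition \ref{prop:nistwo} as the $n=2$ specialization of Proposition \ref{prop:cost}(1): here the collection $\mathcal{V} = \{v_1,\ldots,v_{n-2}\}$ is empty, and $v_1,v_2$ play the roles of $\alpha(e_1),\beta(e_2)$ for the two deleted edges $e_1,e_2$, with $A = v_1 \wedge v_2$ matching the meet $C$ there (the labels $i,j$ are merely swapped between the two statements). In principle one could quote that result, but since the two-strand case is so transparent I would instead give a short self-contained computation using Note \ref{note:intuitive} and Lemma \ref{lemma:redundant}. By Note \ref{note:intuitive}, the flow $f(v_1;v_2)$ is obtained by holding $v_2$ fixed and sliding $v_1$ one edge at a time toward the basepoint $\ast$ along $T$; the cost $c(v_1;v_2) = M^{\infty}(f(v_1;v_2))$ is then computed edge-by-edge (as in the worked examples), so the whole problem reduces to deciding which edges of the path reduce to the trivial word and analyzing the one that does not.

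First I would dispose of case (1). If $v_1 < v_2$, then $v_2$ is not an ancestor of $v_1$, so it never blocks the descent, and every tree edge $e$ crossed satisfies $\iota(e) \le v_1 < v_2$; hence $v_2 \notin (\tau(e),\iota(e))$ and Lemma \ref{lemma:redundant}(2) forces each edge to flow to $1$, giving $c(v_1;v_2)=1$. If instead $v_2 = \ast$, then $v_1$ slides all the way down and every edge of the path is collapsible (its only candidate blocker sits at the basepoint, below every vertex), so again $c(v_1;v_2)=1$.

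The substantive case is (2), with $v_1 > v_2 \neq \ast$. Here I would first record the direction inequality $j < i$: since $v_2 < v_1$ and (in the non-degenerate situation) both lie in non-zero directions from the meet $A$, the subtree in direction $j = d(A,v_2)$ is traversed before the subtree in direction $i = d(A,v_1)$, forcing $j<i$. As $v_1$ descends it crosses exactly one edge $\bar{e} = [A,c]$ pointing in direction $i$ out of $A$ for which the held vertex lies strictly inside, namely $A = \tau(\bar{e}) < v_2 < \iota(\bar{e})$ (the strict upper inequality is exactly where $j<i$ is used); every other edge -- those above $A$ in direction $i$, and those on the segment from $A$ down to $\ast$ -- has $v_2$ outside its open interval, hence reduces to $1$ by Lemma \ref{lemma:redundant}(2). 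Applying $M^{\infty}$ to the surviving $1$-cell $\{\bar{e},v_2\}$, repeated use of Lemma \ref{lemma:redundant}(1) slides $v_2$ inward until it is blocked by $\bar{e}$, producing precisely the critical cell $A_i[\delta_i + \delta_j]$; therefore $c(v_1;v_2) = A_i[\delta_i+\delta_j]$.

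The main obstacle -- really the only delicate point -- is the bookkeeping in case (2): isolating the unique non-vanishing edge $\bar{e}$ and checking that the rest of the flow collapses, which rests squarely on the ordering claim $j<i$ together with the two parts of Lemma \ref{lemma:redundant}. I would also dispatch the degenerate possibility that $v_2$ is an ancestor of $v_1$ (so that $A = v_2$ and $j=0$): in that situation $v_1$ is blocked by $v_2$ before any edge $\bar{e}$ of the above type is reached, every edge of the flow again satisfies the hypothesis of Lemma \ref{lemma:redundant}(2), and the cost collapses to $1$ -- consistent with reading the degenerate symbol $A_i[\delta_i+\delta_0]$ (which is not a critical cell) as trivial.
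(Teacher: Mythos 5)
Your proposal is correct. The paper's own proof of Proposition \ref{prop:nistwo} is a one-line citation: ``This follows from Proposition \ref{prop:vanishing} and Proposition \ref{prop:cost}.'' You correctly identify that citation route at the outset (including the swap of the index labels $i,j$ between the two statements and the fact that $\mathcal{V}=\emptyset$ when $n=2$), and then opt instead for a self-contained computation. Your direct argument is sound and essentially reproduces, in the two-strand special case, the reasoning the paper uses to prove Proposition \ref{prop:cost}(1): the descent of $v_1$ along $[v_1,\ast]$ crosses exactly one edge $\bar{e}$ with $\tau(\bar e)=A<v_2<\iota(\bar e)$ (your observation that $j<i$ is precisely what makes the upper inequality work, and is the one point the paper leaves implicit), every other edge dies by Lemma \ref{lemma:redundant}(2), and the survivor flows to $A_i[\delta_i+\delta_j]$ by repeated application of Lemma \ref{lemma:redundant}(1). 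What your version buys is transparency and a check that the earlier machinery specializes correctly; what the citation buys is brevity. Your handling of the degenerate case $A=v_2$ (so $j=0$) is actually more careful than the paper, which does not comment on how to read $A_i[\delta_i+\delta_0]$; interpreting it as trivial is the only consistent reading, and your verification that the cost genuinely vanishes there is a worthwhile addition.
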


\begin{proof}
This follows from Proposition \ref{prop:vanishing} and Proposition \ref{prop:cost}.
\end{proof}

\begin{figure}[!h] 
\begin{center}
\includegraphics{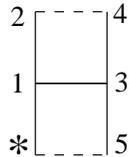}
\end{center}
\caption{The Morse presentation of the braid group $B_{2}\Gamma$ (for the $\Gamma$ pictured) has one relator.}
\label{fig:boxy}
\end{figure}  

\begin{example}
We let $n=2$, let $\Gamma$ be the graph pictured in Figure \ref{fig:boxy}, and let $T$ be the indicated maximal tree.

There is just one critical $2$-cell, $\{ [\ast, 5], [2,4] \}$.  Now note that $c(5;4) = B_{2}[1,1]$, $c(5;2) = A_{2}[1,1]$, and all other costs are trivial.  (Here we've written $c(5;4)$ rather than $c(5; \iota([2,4]))$ for the sake of brevity.)  If we let $\mathbf{e}_{1} = \{ \ast, [2,4] \}$ and $\mathbf{e}_{2} = \{ 1, [\ast, 5] \}$, then
	$$ B_{2}\Gamma \cong \langle A_{2}[1,1], B_{2}[1,1], \mathbf{e}_{1}, \mathbf{e}_{2} \mid 
	\mathbf{e}_{2}\mathbf{e}_{1}\mathbf{e}_{2}^{-1} 
	= B_{2}[1,1]\mathbf{e}_{1}(A_{2}[1,1])^{-1} \rangle.$$
It is clear therefore that $B_{2}\Gamma$ is a free group on three generators.
\end{example}

\begin{note} \label{note:important}
Theorem \ref{thm:biggie} and Proposition \ref{prop:nistwo} together give a general method for finding the relation corresponding to a given critical $2$-cell $c$ in the $n = 2$ case (and, therefore, for finding presentations for the groups $B_{2}\Gamma$).  The method can be sketched as follows.  We suppose that $c = \{ e_{1}, e_{2} \}$ and let $\mathbf{e}_{i}$ denote the critical $1$-cell consisting of the edge $e_{i}$ and a single vertex blocked at the basepoint. (That is, $\mathbf{e}_{i} = \{ \ast, e_{i} \}$ if $\ast \cap e_{i} = \emptyset$, and $\mathbf{e}_{i} = \{ 1, e_{i} \}$ otherwise.)

\begin{enumerate}
\item We begin by drawing an octagon and labelling four of the faces (as pictured below).
\begin{figure}[!h]
\begin{center}
\includegraphics{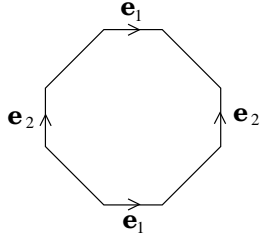}
\end{center}
\caption{The relators in a presentation for $B_{2}\Gamma$ can be represented by labelled octagons.  Here is one such octagon, with four of the eight sides labelled.}
\label{fig:octy}  
\end{figure}

\item Each of the four remaining faces will be labelled by a single critical $1$-cell (or by the trivial element).  This follows from Proposition \ref{prop:nistwo} and Theorem \ref{thm:biggie}.  We now determine the orientations on each of the slanted faces.  Consider first the orientation on the bottom left slanted face.  If $\iota(e_{1}) > \iota(e_{2})$, then the orientation on the bottom left face points up and to the left; if $\iota(e_{2}) > \iota(e_{1})$, then the orientation points down and to the right.  Thus, if $\iota(e_{i}) > \iota(e_{j})$, then the orientation points from the initial vertex of $\mathbf{e}_{i}$ to the initial vertex of $\mathbf{e}_{j}$.  

One follows exactly the same procedure at each slanted face:  the orientation points away from the larger vertex, and towards the smaller vertex (if we confuse, for the moment, the edges $\mathbf{e}_{\ell}$ and $e_{\ell}$ ($ \ell \in \{ i, j \}$)).

\item Finally, we label each corner with a critical $1$-cell (i.e., a generator).  Consider the bottom right corner, which connects the terminal vertex of $\mathbf{e}_{1}$ with the initial vertex of $\mathbf{e}_{2}$.   Let $A$ denote $\tau(e_{1}) \wedge \iota(e_{2})$.  Let $d(A, \tau(e_{1})) = i$ and $d(A, \iota(e_{2}))= j$.  If either $i$ or $j = 0$, which can happen only if $\tau(e_{1})$ or $\iota(e_{2}) = \ast$ (respectively), then we label the corner with $1$.  Otherwise, we label the corner with either $A_{j}[\delta_{i} + \delta_{j}]$ (if it is critical) or with $A_{i}[\delta_{i} + \delta_{j}]$ (if it is critical).  (Exactly one of these cells is critical, and the other is collapsible.)
\end{enumerate}
\end{note}

\begin{example} \label{ex:wheel} Consider $B_{2}\Gamma$, where $\Gamma$ is the graph depicted in Figure \ref{fig:wheel}.  We have chosen a subdivision of $\Gamma$ and an embedding of $\Gamma$ into the plane.

\begin{figure}[!h]
\begin{center}
\includegraphics{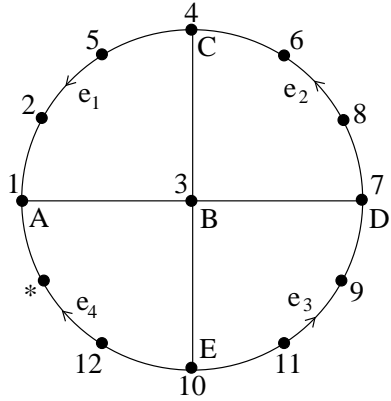}

\caption{Here is a simple graph $\Gamma$.  We compute a presentation for $B_{2}\Gamma$.}
\label{fig:wheel}
\end{center}
\end{figure}

\begin{figure}[!h] 
\begin{center}
\includegraphics{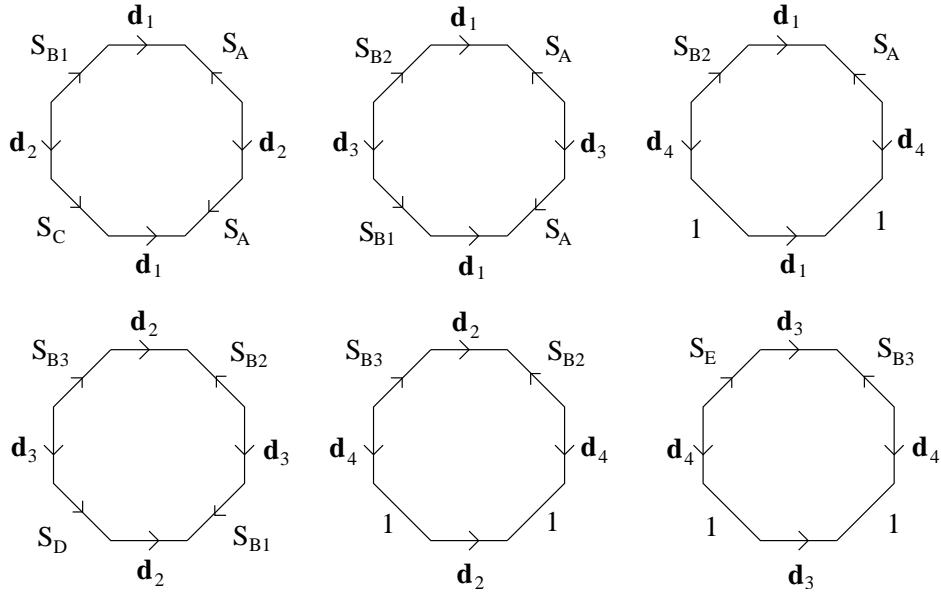}

\caption{These are the relators for the $2$-strand braid group on the graph in Figure \ref{fig:wheel}.}
\label{fig:examplerelators}
\end{center}
\end{figure}



We follow our usual convention for labelling essential vertices by letters (see Subsection \ref{subsec:prelim}), so $A=1$, $B=3$, $C=4$, $D=7$, and $E=10$.  There are eleven critical $1$-cells: $\mathbf{d}_{i} = \{ \ast, e_{i} \}$ ($i = 1,2,3$), $\mathbf{d}_{4} = \{ 1, e_{4} \}$, $S_{B1} = B_{2}[1,1,0]$, 
$S_{B2} = B_{3}[1,0,1]$, $S_{B3} = B_{3}[0,1,1]$, and $S_{X} = X_{2}[1,1]$, for $X \in \{ A, C, D \}$. There are six critical $2$-cells $\{ e_{i}, e_{j} \}$ ($1 \leq i < j \leq 4$).

We can follow the procedure sketched in Note \ref{note:important} in order to find the relations.  They are pictured in Figure \ref{fig:examplerelators}.

We can eliminate by Tietze transformations the generators $E_{2}[1,1]$, $D_{2}[1,1]$, $C_{2}[1,1]$, $B_{2}[1,1,0]$, $A_{2}[1,1]$, and $B_{3}[1,0,1]$ (in that order).  The resulting presentation shows that $B_{2}\Gamma$ is a free group on five generators:
	$$ B_{2} \Gamma \cong \langle \mathbf{d}_{1}, \mathbf{d}_{2}, \mathbf{d}_{3}, \mathbf{d}_{4}, 
	B_{3}[0,1,1] \rangle.$$
\end{example}

\begin{proposition}
If $\Gamma$ is a graph such that there exists a vertex $v_0$ in $\Gamma$ which is on every simple loop in $\Gamma$,  then $B_2\Gamma$ is free.
\end{proposition}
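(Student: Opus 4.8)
The plan is to reduce everything to one combinatorial observation: under the hypothesis the maximal tree $T$ can be chosen so that the Morse matching of Definition \ref{def:critical} on $\ud{2}{\Gamma}$ has \emph{no} critical $2$-cells whatsoever. Theorem \ref{thm:biggie} then supplies a presentation of $B_2\Gamma$ with empty relator set, so $B_2\Gamma$ is free on its critical $1$-cells.

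First I would extract the structure forced by the hypothesis. If $v_0$ meets every simple loop of $\Gamma$, then $\Gamma \setminus \{v_0\}$ contains no cycle and is therefore a forest, with components $C_1, \dots, C_m$. Since $B_2\Gamma$ depends only on the homeomorphism type of $\Gamma$, I may first subdivide $\Gamma$ until it is sufficiently subdivided for $n = 2$; subdivision keeps $v_0$ a vertex and neither creates nor destroys simple loops, so the hypothesis persists. I would then build a spanning tree $T$ by taking every edge lying inside some $C_k$ together with exactly one edge joining $v_0$ to each $C_k$. A short count shows this leaves $\deg(v_0) - m = b_1(\Gamma)$ deleted edges, and --- choosing in each subdivided connecting path the sub-edge that meets $v_0$ --- every deleted edge is incident to $v_0$. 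The essential feature is that all deleted edges share the vertex $v_0$.

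With this tree in hand the conclusion is immediate from the matching. A critical $2$-cell of $\ud{2}{\Gamma}$ has exactly two factors, both edges, say $\{e_1, e_2\}$, and criticality requires each $e_i$ to be non-order-respecting. The cell has no vertex factors, so clause (2) in the definition of non-order-respecting cannot apply; hence each $e_i$ must satisfy clause (1), i.e.\ each must be a deleted edge. But every deleted edge contains $v_0$, so $\overline{e_1} \cap \overline{e_2} \ni v_0$, contradicting the requirement that the two cells of a configuration have disjoint closures. Thus no critical $2$-cell exists, the relator set of Theorem \ref{thm:biggie} is empty, and $B_2\Gamma$ is free.

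I expect the only delicate point to be the tree-selection step: one must verify at once that $T$ is a genuine spanning tree, that all deleted edges can be taken incident to $v_0$, and that deleting the sub-edge at $v_0$ along a subdivided path still leaves the remainder of that path attached to its component. All of this is routine given the forest structure of $\Gamma \setminus \{v_0\}$. It is worth noting that Assumption \ref{assumption} plays no role here: the argument computes no costs and relies only on sufficient subdivision together with Theorem \ref{thm:biggie}.
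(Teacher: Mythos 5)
Your proposal is correct and follows essentially the same route as the paper: choose a maximal tree so that every deleted edge is incident to $v_0$ (using that $\Gamma \setminus \{v_0\}$ is a forest), observe that critical $2$-cells of $\ud{2}{\Gamma}$ are exactly unordered pairs of disjoint deleted edges, and conclude that none exist. The only cosmetic difference is that you derive the identification of critical $2$-cells with pairs of disjoint deleted edges directly from Definition \ref{def:critical}, whereas the paper simply asserts it.
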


\begin{proof}
We want to show that one can choose a maximal tree $T \subseteq \Gamma$ such that every deleted edge touches $v_{0}$. Since critical $2$-cells in $\ud{2}{\Gamma}$ are in one-to-one correspondence  with unordered pairs of disjoint deleted edges, the existence of such a tree shows that there are no critical $2$-cells, and therefore that the Morse presentation of $\ud{2}{\Gamma}$ defines a free group.

We now find a tree $T$ with the desired property.  The assumptions imply that $\Gamma- \{ v_{0} \}$ consists of a disjoint union of trees $T_{1}, \ldots, T_{n}$.  We fix a tree $T_{i}$.  Certain of the edges of $T_{i}$ are half-open, since their closures contain $v_{0}$.  We let $e_{i1}, \ldots, e_{ik}$ denote these half-open edges.  We let
	$$ \widehat{T}_{i} = T_{i} - \left( \bigcup_{j=2}^{k} \mathring{e}_{ij} \right),$$
where $\mathring{e}$ denotes the interior of $e$.  Note that $\widehat{T}_i$ contains $\mathring{e}_{i1}$, and so $\widehat{T}_i$ is a tree with exactly 1 leaf at $v_0$.  The union $T = {v_0} \sqcup \widehat{T}_{1} \sqcup \ldots \sqcup \widehat{T}_{n}$ is thus a maximal tree in $\Gamma$ with the property that all edges $e \not \subseteq T$ touch $v_{0}$.
\end{proof}

\begin{theorem}\label{thm:disjointloops}
Let $\Gamma$ be a graph in which every pair of simple loops is disjoint.  The group $B_{2}\Gamma$ has a  Morse presentation such that all relators are commutators corresponding to pairs of simple loops.
\end{theorem}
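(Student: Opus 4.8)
The plan is to imitate the proof of the preceding proposition: choose the maximal tree $T$ so that the deleted edges, and hence the critical $2$-cells, are tightly controlled, and then compute the relator attached to each critical $2$-cell via Theorem \ref{thm:biggie} and Proposition \ref{prop:nistwo}, checking that it is a commutator. First I would subdivide $\Gamma$ so that it satisfies Assumption \ref{assumption} for $n=2$; subdividing edges changes neither $B_2\Gamma$ nor the hypothesis that distinct simple loops are vertex-disjoint, so there is no loss. Because the simple loops $L_1, \ldots, L_m$ are pairwise disjoint, I can build a spanning tree $T$ by deleting exactly one edge $e_k$ from each $L_k$ and keeping every other edge: deleting one edge per loop destroys every cycle (each independent cycle is carried by a single $L_k$), so $T$ is maximal and its deleted edges are exactly $e_1, \ldots, e_m$. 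Crucially, since the loops are vertex-disjoint, the edges $e_1, \ldots, e_m$ have pairwise disjoint closures.

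Next I would identify the critical $2$-cells. As recorded in the preceding proposition, critical $2$-cells of $\ud{2}{\Gamma}$ are in bijection with unordered pairs of disjoint deleted edges. Every pair $\{e_k, e_l\}$ with $k \neq l$ is disjoint, and no pair arises from a single loop (each loop deletes only one edge); hence the critical $2$-cells are exactly the $\binom{m}{2}$ pairs $\{e_k, e_l\}$, one per pair of simple loops. This already supplies the desired indexing of the relators by pairs of loops.

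It then remains to compute, for each critical $2$-cell $\{e_k, e_l\}$, the relator furnished by Theorem \ref{thm:biggie} and to show it is a commutator. With $n=2$ we have $\mathcal{V} = \emptyset$, and by Note \ref{note:important} the relator is read off the octagon whose four slanted faces are conjugates of $\mathbf{e}_k$ and $\mathbf{e}_l$ and whose corner labels are costs $c(\alpha(e_k); \beta(e_l))$ and $c(\beta(e_l); \alpha(e_k))$ with $\alpha, \beta \in \{\iota, \tau\}$, all governed by Proposition \ref{prop:nistwo} since $e_k, e_l \not\subseteq T$. I would write the relator as $R = P_1\, Q_2\, P_2^{-1}\, Q_1^{-1}$, where $P_1 = M^{\infty}\{\iota(e_k), e_l\}$ and $P_2 = M^{\infty}\{\tau(e_k), e_l\}$ are the two faces conjugate to $\mathbf{e}_l$, and $Q_1 = M^{\infty}\{e_k, \iota(e_l)\}$, $Q_2 = M^{\infty}\{e_k, \tau(e_l)\}$ are the two faces conjugate to $\mathbf{e}_k$. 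The goal becomes to show $P_1 = P_2$ and $Q_1 = Q_2$; then $R = [P_1, Q_1]$ is manifestly a commutator of conjugates of the two loop generators. The engine is Proposition \ref{prop:nistwo}: a cost $c(v; w)$ is trivial unless $v > w \neq \ast$, in which case it is the single generator $(v \wedge w)_{d(v \wedge w, v)}[\delta_{d(v \wedge w, v)} + \delta_{d(v \wedge w, w)}]$, depending only on the meet $v \wedge w$ and the two directions out of it.

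The main obstacle is establishing the equalities $P_1 = P_2$ and $Q_1 = Q_2$. When the two loops lie in different branches of $T$ (their vertex sets separated by a branch vertex not on either loop), the tree-order places every vertex of one loop below every vertex of the other, every endpoint of $e_k$ has the same meet and the same pair of directions with every endpoint of $e_l$, and Proposition \ref{prop:nistwo} makes the four $e_k$-side costs equal and the four $e_l$-side costs trivial (or vice versa); the equalities, and hence the commutator form, follow at once. The delicate case is when one loop hangs below a vertex of the other, so that the tree-order need not separate the two loops and the two endpoints of a single $e_k$ may meet $e_l$ differently. Here I would exploit the freedom in choosing which edge of each loop to delete, taking $e_k$ to be an edge incident to the topmost vertex of $L_k$, together with Proposition \ref{prop:vanishing} (which guarantees that at each corner one of the two competing costs already vanishes), to show that the costs flanking each occurrence of $\mathbf{e}_k$ (resp.\ $\mathbf{e}_l$) still coincide. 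This keeps $P_1 = P_2$ and $Q_1 = Q_2$, so $R = [P_1, Q_1]$ remains a commutator, completing the proof.
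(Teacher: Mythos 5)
Your setup is sound and is essentially the route the paper takes: critical $2$-cells are unordered pairs of (necessarily disjoint) deleted edges, each deleted edge determines a simple loop, the relator is the eight-term word of Theorem \ref{thm:biggie}, and the whole problem reduces to showing that the two cost-words flanking the two occurrences of $\mathbf{e}_k$ coincide, and likewise for $\mathbf{e}_l$. (Incidentally, the special spanning tree is unnecessary: for \emph{any} maximal tree satisfying Assumption \ref{assumption}, the fundamental cycle $\gamma_i = [\tau(e_i),\iota(e_i)] \cup e_i$ of a deleted edge is a simple loop, and distinct deleted edges give distinct, hence disjoint, loops.) The genuine gap is in the one place where the real work lives, your ``delicate case.'' Proposition \ref{prop:vanishing} only guarantees that at each corner \emph{one} of the two competing costs vanishes; it says nothing about whether the surviving costs at \emph{different} corners agree, which is the equality you actually need. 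And choosing $e_k$ incident to the topmost vertex $p_k$ of its loop only arranges $\tau(e_k) = p_k$; you are still left having to show that every cost computed with $\iota(e_k)$ equals the corresponding cost computed with $p_k$, where $\iota(e_k)$ sits at the far end of the loop. That comparison is exactly the general difficulty, so the special choice buys you essentially nothing.

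What is needed, and what the paper proves for an arbitrary admissible tree, is that in all eight costs both endpoints of $e_2$ (the edge whose loop has the larger minimal vertex $p_2$) may be replaced by $p_2$; this forces the two cost-words flanking $\mathbf{e}_1$ to coincide, likewise for $\mathbf{e}_2$, and the relator collapses to a commutator. This requires two separate flow computations. For $c(\iota(e_2);\alpha(e_1)) = c(\tau(e_2);\alpha(e_1))$ one observes that both flows contain the common subpath $f(p_2;\alpha(e_1))$, and that only an edge on this common subpath can contribute, since the initial segments descending from $\iota(e_2)$ or $\tau(e_2)$ to $p_2$ stay inside $\gamma_2$ and are killed by Lemma \ref{lemma:redundant}(2) (no vertex of $\gamma_1$ is a descendant of a vertex of $\gamma_2$). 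For $c(\alpha(e_1);\iota(e_2)) = c(\alpha(e_1);\tau(e_2))$ one uses disjointness to see that $[\ast,\iota(e_1)]$ misses $p_2$, locates the unique edge $e$ on it with $\tau(e) < p_2 < \iota(e)$, and checks that $\iota(e_2)$ and $\tau(e_2)$ both lie in $(p_2,\iota(e))$, so the two contributing edges $\{e,\iota(e_2)\}$ and $\{e,\tau(e_2)\}$ flow to the same critical cell by Lemma \ref{lemma:redundant}(1). Neither computation appears in your proposal; note also that within your delicate case one can have $\tau(e_1) < p_2 < \iota(e_1)$, so that some of the costs on the $e_1$ side are trivial and others are not --- a configuration your two-case sketch does not engage with, and which the uniform replacement-by-$p_2$ argument is designed to absorb.
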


\begin{proof}
We consider a copy of $\Gamma$ (with maximal tree $T$) satisfying Assumption \ref{assumption}.  Let $c$ be a critical $2$-cell in $\ud{2}{\Gamma}$.  It follows that $c= \{ e_{1}, e_{2} \}$ for some deleted edges $e_{1}, e_{2} \subseteq \Gamma$.

We first note that $\gamma_{i} = [ \tau(e_{i}), \iota(e_{i})] \cup e_{i}$ is a simple loop for $i=1,2$, so, in particular, $\gamma_{1} \cap \gamma_{2} = \emptyset$.  We let $p_{i}$ denote the smallest vertex on $\gamma_{i}$ for $i=1,2$.  We assume, without loss of generality, that $p_{1} < p_{2}$.

The relation determined by $\{e_{1}, e_{2} \}$ takes the following form, by Theorem \ref{thm:biggie}:
	\begin{align}
	w_{\iota,\tau}\mathbf{e}_{1}w_{\tau,\tau}^{-1}\mathbf{e}_{2}^{-1}w_{\tau,\iota}\mathbf{e}_{1}^{-1}
	w_{\iota,\iota}^{-1}\mathbf{e}_{2},
	\end{align}
where 
	\begin{align}
	w_{\alpha,\beta} = c( \alpha(e_{1});\qquad \beta(e_{2}))^{-1}c(\beta(e_{2}); \qquad \alpha(e_{1})),
	\end{align}
for $\alpha, \beta \in \{ \iota, \tau \}$.  It will follow that the above relation is a commutator relation if we show that $\iota(e_{2})$ and $\tau(e_{2})$ may both be replaced by $p_{2}$ in all of the above costs, for then  $w_{\iota,\tau} = w_{\iota, \iota}$ and $w_{\tau, \iota} = w_{\tau, \tau}$, so the above relation becomes $$[w_{\iota,\iota} \mathbf{e}_{1} w_{\tau, \tau}^{-1}, \mathbf{e}_{2}^{-1}].$$

We first show that $c(\iota(e_{2}); \iota(e_{1})) = c(p_{2}; \iota(e_{1})) = c(\tau(e_{2}); \iota(e_{1}))$.  There are two cases: either $[\ast, p_{2}] \cap \gamma_{1} = \emptyset$ or $[\ast, p_{2}] \cap \gamma_{1} \neq \emptyset$. If $[\ast, p_{2}] \cap \gamma_{1} = \emptyset$, then all of the vertices on $\gamma_{2}$ are greater than all of the vertices on $\gamma_{1}$.  The flows $f(\iota(e_{2}); \iota(e_{1}))$ and $f(\tau(e_{2}); \iota(e_{1}))$ both contain the subpath $f(p_{2}; \iota(e_{1}))$.  This subpath contains the only edge making a contribution to either cost, namely $\{ \iota(e_{1}), e \}$, where $e$ is the edge pointing in direction $d(p_{1} \wedge p_{2}, p_{2})$ from $p_{1} \wedge p_{2}$.  Therefore, $c(\iota(e_{2}); \iota(e_{1})) = c( \tau(e_{2}); \iota(e_{1}))$.  If $[\ast, p_{2}] \cap \gamma_{1} \neq \emptyset$, then the argument is similar, except that the only contribution comes from the edge $\{ \iota(e_{1}), e \}$, where $e$ is the edge pointing in direction $d(\iota(e_{1}) \wedge p_{2}, p_{2})$ from $\iota(e_{1}) \wedge p_{2}$.  The edge $\{ \iota(e_{1}), e \}$ is still common to the edge-paths $f(\iota(e_{2}); \iota(e_{1}))$ and $f(\tau(e_{2}); \iota(e_{1}))$, and therefore $c(\iota(e_{2}); \iota(e_{1})) = c(\tau(e_{2}); \iota(e_{1}))$.

Now we show that $c(\iota(e_{1}); \iota(e_{2})) = c(\iota(e_{1}); p_{2}) = c(\iota(e_{1}); \tau(e_{2}))$.  If 
$\iota(e_{1}) < p_{2}$ (and therefore $\iota(e_{1}) < \iota(e_{2})$ and $\iota(e_{1}) < \tau(e_{2})$), then all of these costs are trivial, so we may assume that $\iota(e_{1}) > p_{2}$.  Now the path $[ \ast, \iota(e_{1})] = [\ast, p_{1}] \cup [p_{1}, \iota(e_{1})]$ doesn't contain the point $p_{2}$, so there will be a unique edge $e$ in the edge-path $[\ast,\iota(e_{1})]$ such that $\tau(e) < p_{2} < \iota(e)$.

We claim that $\iota(e_{2}), \tau(e_{2}) \in (p_{2}, \iota(e))$.  Indeed, the inequality $\tau(e) < p_{2} < \iota(e)$ implies that $0 < d(\tau(e), p_{2}) < d(\tau(e), \iota(e)) = d(\tau(e), \iota(e_{1}))$.  It is clear that $d(\tau(e), p_{2}) = d(\tau(e), \iota(e_{2})) = d(\tau(e), \tau(e_{2}))$, so $d(\tau(e), \iota(e_{2})), d(\tau(e), \tau(e_{2})) < d(\tau(e), \iota(e_{1}))$.  It follows directly that $\iota(e_{2}), \tau(e_{2}) < \iota(e_{1})$.  The inequalities $p_{2} < \tau(e_{2}), \iota(e_{2})$ follow from the definition of $p_{2}$.  This proves the claim.

The flows $f(\iota(e_{1}); \iota(e_{2}))$ and $f(\iota(e_{1}); \tau(e_{2}))$ contain the edges $\{ e, \iota(e_{2})\}$ and $\{ e, \tau(e_{2}) \}$ (respectively; $e$ as above), and these are the only edges that will contribute to the costs.  Both of these edges flow to $\{ v, e \}$ by Lemma \ref{lemma:redundant}(1), where $v$ is the vertex adjacent to $\tau(e)$ satisfying $d(\tau(e), v) = d(\tau(e), p_{2})$.  It follows that $c(\iota(e_{1}); \iota(e_{2})) = c( \iota(e_{1}); \tau(e_{2}))$.

The preceding argument shows that $\tau(e_{2})$ and $\iota(e_{2})$ may be replaced by $p_{2}$ in the costs  $c(\tau(e_{2}); \iota(e_{1}))$, $c(\iota(e_{2}); \iota(e_{1}))$, $c(\iota(e_{1}); \tau(e_{2}))$, and $c(\iota(e_{1}); \iota(e_{2}))$. Essentially the same argument show that $\tau(e_{2})$ and $\iota(e_{2})$ may be replaced by $p_{2}$ in the costs involving $\tau(e_{1})$ (rather than $\iota(e_{1})$).  

To finish the proof, we note that unordered pairs of disjoint simple loops are in bijective correspondence with unordered pairs of distinct deleted edges; i.e., with critical $2$-cells, and therefore with relators in the Morse presentation.
\end{proof}

The following conjecture is a large generalization of Theorem \ref{thm:disjointloops}:

\begin{conjecture}
Let $\Gamma$ be a planar graph.  Then there exists a presentation for $B_2\Gamma$ such that the relators are all commutators corresponding to pairs of disjoint simple loops.
\end{conjecture}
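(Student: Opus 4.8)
The plan is to reduce the conjecture to the commutator criterion implicit in the proof of Theorem~\ref{thm:disjointloops}, and then to control the remaining costs using the planar embedding. I would begin by fixing a planar embedding of $\Gamma$ together with a compatible maximal tree $T$ satisfying Assumption~\ref{assumption}, choosing the root and clockwise traversal so that the vertex numbering refines the cyclic order of edges around each vertex dictated by the embedding. For a critical $2$-cell $\{e_{1},e_{2}\}$ the relator produced by Theorem~\ref{thm:biggie} is
$$w_{\iota,\tau}\mathbf{e}_{1}w_{\tau,\tau}^{-1}\mathbf{e}_{2}^{-1}w_{\tau,\iota}\mathbf{e}_{1}^{-1}w_{\iota,\iota}^{-1}\mathbf{e}_{2},\qquad w_{\alpha,\beta}=c(\alpha(e_{1});\beta(e_{2}))^{-1}c(\beta(e_{2});\alpha(e_{1})),$$
and, exactly as in Theorem~\ref{thm:disjointloops}, this is the commutator $[w_{\iota,\iota}\mathbf{e}_{1}w_{\tau,\tau}^{-1},\mathbf{e}_{2}^{-1}]$ as soon as $w_{\iota,\tau}=w_{\iota,\iota}$ and $w_{\tau,\iota}=w_{\tau,\tau}$. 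Since Proposition~\ref{prop:nistwo} shows that every cost is either trivial or a single generator $A_{i}[\delta_{i}+\delta_{j}]$, the entire problem becomes: can the tree be chosen, and the surviving swap generators $A_{i}[\delta_{i}+\delta_{j}]$ subsequently eliminated, so that each cost involving $e_{2}$ is insensitive to the choice $\iota(e_{2})$ versus $\tau(e_{2})$, and symmetrically for $e_{1}$?

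I would then split on the geometry of the two simple loops $\gamma_{i}=[\tau(e_{i}),\iota(e_{i})]\cup e_{i}$. If $\gamma_{1}\cap\gamma_{2}=\emptyset$, the proof of Theorem~\ref{thm:disjointloops} applies verbatim: both endpoints of each loop collapse to the smallest loop vertex $p_{i}$, the four costs coincide in pairs, and the relator is a commutator. All the difficulty is concentrated in the \emph{overlapping} case, in which $\gamma_{1}$ and $\gamma_{2}$ share a sub-arc of $T$. The geometric reason to expect a commutator is that planarity forbids the two loops from linking: in the plane they bound regions that are disjoint or nested, so the strand motions around $\gamma_{1}$ and $\gamma_{2}$ can be isotoped to disjoint parts of $\Gamma$. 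I would try to make this precise at the level of costs by showing that, for a suitable tree, the meet governing each cost involving $e_{2}$ lies on the shared arc and sees both $\iota(e_{2})$ and $\tau(e_{2})$ in a common direction, so that by Proposition~\ref{prop:nistwo} the two costs agree.

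The main obstacle is exactly this overlapping analysis. When $\gamma_{1}$ and $\gamma_{2}$ share an arc one cannot simply replace $\iota(e_{2})$ and $\tau(e_{2})$ by a common vertex, since along the shared arc one endpoint may be blocked by $e_{1}$ while the other is not; the formulas of Proposition~\ref{prop:cost}(2)--(3) then genuinely differ and honest swap generators $A_{i}[\delta_{i}+\delta_{j}]$ persist in the relator. My plan for this is an induction on the nesting depth of the loop family, at each stage using the relators to solve for and then Tietze-eliminate every swap generator that a relator determines — precisely the hand computation carried out in Example~\ref{ex:wheel}. The aim is to show that after these eliminations the residual relator attached to an overlapping pair either collapses to the identity or is rewritten as a commutator of two genuinely disjoint simple loops. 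The crux, and the reason the statement remains only a conjecture, is the global consistency of this elimination: one must verify that no swap generator is ever required to be eliminated in two incompatible ways, and I expect this coherence to follow from a laminarity property of the simple loops of a planar graph, namely that with a well-chosen embedding the loops sharing a fixed arc are linearly ordered by nesting.

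Finally, I would package the output: unordered pairs of disjoint deleted edges index the critical $2$-cells, and the construction above assigns to each surviving relator an unordered pair of disjoint simple loops, yielding the desired commutator presentation. Should the direct cost bookkeeping prove unmanageable, a natural alternative is structural: decompose $\Gamma$ along its cut vertices into $2$-connected blocks and prove the statement block by block, using inside each planar block its face boundaries as a canonical laminar family of simple loops; the interactions across a cut vertex are then governed by the constraint that two strands cannot simultaneously occupy that vertex, which should again force the cross-block relators to be commutators.
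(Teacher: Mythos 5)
The statement you are trying to prove is stated in the paper as a \emph{conjecture}, not a theorem: the authors write explicitly that proving it was an original goal of the paper, that they verified it in many examples, and that ``a proof is elusive.'' So there is no proof in the paper to compare against, and your proposal does not close the gap either --- by your own admission. The disjoint-loop case of your argument is fine (the per-pair argument in the proof of Theorem~\ref{thm:disjointloops} is local to the pair $\{e_{1},e_{2}\}$ and only uses $\gamma_{1}\cap\gamma_{2}=\emptyset$ for that pair, so it does transfer), but everything of substance is concentrated in the overlapping case, and there your plan is a strategy statement rather than an argument.

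Concretely, two things are missing. First, when $\gamma_{1}$ and $\gamma_{2}$ share an arc of $T$, the costs $c(\alpha(e_{1});\iota(e_{2}))$ and $c(\alpha(e_{1});\tau(e_{2}))$ genuinely differ (one endpoint can be blocked by $e_{1}$ while the other is not), so the relator is not a commutator as written; your proposed fix is to Tietze-eliminate the surviving swap generators $A_{i}[\delta_{i}+\delta_{j}]$ by induction on nesting depth, but you never exhibit, for a general planar $\Gamma$, a consistent scheme of eliminations --- the ``global coherence'' you flag is precisely the unproved step, and no laminarity lemma is stated or proved. Second, even granting the eliminations, the conjecture asks that each surviving relator be a commutator \emph{corresponding to a pair of disjoint simple loops}, whereas critical $2$-cells are indexed by pairs of disjoint \emph{deleted edges} whose associated loops may overlap; you gesture at reassigning relators to disjoint loop pairs but give no correspondence and no argument that the rewritten relator is the commutator attached to such a pair. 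The block-decomposition alternative is likewise only sketched. As it stands the proposal is a plausible research plan for an open problem, not a proof.
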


An original goal for this paper was to prove this conjecture.  In a large number of examples, the authors have verified the result.  In general, though, a proof is elusive.

\section{A Class of Examples}
\label{sec:threeandmore}

Let $I=[0,1]$.  The wedge sum of $I$ with the circle $S^{1}$ along the basepoint $0 \in I$ is called a \emph{balloon}.  A \emph{balloon graph} is a wedge sum of a finite collection of balloons with a line segment, where all basepoints in the wedge sum are of degree one.  A balloon graph is uniquely determined up to homeomorphism by the number of its balloons.

Fix a balloon graph $\Gamma$ with $m$ balloons.  We will compute presentations of $B_{n}\Gamma$ for $n=2,3$.

\subsection{The case $n=2$}

We choose a copy of $\Gamma$ satisfying Assumption \ref{assumption}.  We embed $\Gamma$ in the plane as in Figure \ref{fig:balloons2}.

\begin{figure}[!h]
\begin{center}
\includegraphics{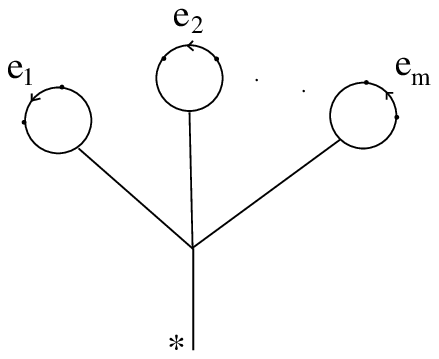}
\end{center}
\caption{This is a picture of a balloon graph with $m$ balloons.  The edges $e_{1}$, $\ldots$, $e_{m}$ are the deleted edges.}
\label{fig:balloons2}
\end{figure}

We will let $A$ denote the lower essential vertex of degree $m+1$.  We will let $B_{i}$ denote the $i$th vertex of degree three from the left (so $B_{i}$ is part of the circle containing the edge $e_{i}$).

The critical $1$-cells in $\ud{n}{\Gamma}$ are of three types:

\begin{enumerate}
\item There are $m$ critical $1$-cells $\{ \ast, e_{i} \}$, where $i \in \{ 1, \ldots, m \}$.  We let $\mathbf{e}_{i}$ denote the critical $1$-cell $\{ \ast, e_{i} \}$.

\item A total of $m$ critical $1$-cells are described by the vector notation $(B_{i})_{2}[1,1]$ ($i \in \{ 1, \ldots, m \}$).  Since a critical $1$-cell of this type is completely determined by the essential vertex $B_{i}$, we write $\mathbf{B}_{i}$ in place of $(B_{i})_{2}[1,1]$.  

\item A total of $\left( \begin{matrix} m \\ 2 \end{matrix} \right)$ critical $1$-cells are described by the vector notation $A_{j}[ \delta_{i} + \delta_{j}]$ ($ 1 \leq i < j \leq m$).  We denote these $\mathbf{A}_{i,j}$.
\end{enumerate}

There are $\left( \begin{matrix} m \\ 2 \end{matrix} \right)$ critical $2$-cells, all of the form $\{ e_{i}, e_{j} \}$ ($1 \leq i < j \leq m$).  It is straightforward to follow the procedure from Note \ref{note:important} and arrive at the following relations.
	$$ [ \mathbf{e}_{j}, \mathbf{A}_{i,j}\mathbf{e}_{i}\mathbf{A}_{i,j}^{-1} ] \quad (1 \leq i < j \leq m).$$
This gives us the following presentation for $B_{2}\Gamma$:
	$$ \langle \mathbf{e}_{i}, \mathbf{B}_{i} (i \in \{ 1, \ldots, m \}), \mathbf{A}_{i,j} (1 \leq i < j \leq m)
	\mid [ \mathbf{e}_{j}, \mathbf{A}_{i,j}\mathbf{e}_{i}\mathbf{A}_{i,j}^{-1} ] \quad (1 \leq i < j \leq m)
	\rangle.$$

\subsection{The case $n=3$}

We choose a version of $\Gamma$ satisfying the Assumption \ref{assumption}.  We carry over our convention of labelling essential vertices from the case $n=2$.

The critical $1$-cells in $\ud{3}{\Gamma}$ are of three types:

\begin{enumerate}
\item There are $m$ critical $1$-cells $\{ \ast, 1, e_{i} \}$, where $i \in \{ 1, \ldots, m \}$.  We let $\mathbf{e}_{i}$ denote the critical $1$-cell $\{ \ast, 1, e_{i} \}$.  

\item A total of $3m$ critical $1$-cells are described by the vector notation $(B_{i})_{2}[a,b]$ ($a,b \geq 1$; $a+b \leq 3$).  A critical $1$-cell of this type is completely determined by $B_{i}$, $a$, and $b$, so we write $\mathbf{B}_{i}[a,b]$ instead.

\item A total of $3 \left( \begin{matrix} m \\ 2 \end{matrix} \right)$ critical $1$-cells are determined by the
vector notation $A_{j}[ a\delta_{i} + b\delta_{j}]$ ($a,b \geq 1$; $a+b \leq 3$; $1 \leq i < j \leq m$).  An
additional $2\left( \begin{matrix} m \\ 3 \end{matrix} \right)$ critical $1$-cells are determined by 
$A_{\ell}[\delta_{i} + \delta_{j} + \delta_{k}]$ ($1 \leq i < j < k \leq m$; $\ell \in \{ j, k \}$).  This makes a
total of $3 \left( \begin{matrix} m \\ 2 \end{matrix} \right) + 2 \left( \begin{matrix} m \\ 3 \end{matrix} \right)$
critical $1$-cells of this type.
\end{enumerate}
We therefore have a total of $4m + 3\left( \begin{matrix} m \\ 2 \end{matrix} \right) + 
2 \left( \begin{matrix} m \\ 3 \end{matrix} \right)$ critical $1$-cells.

There are two types of critical $2$-cells:

\begin{enumerate}
\item There are $\left( \begin{matrix} m \\ 2 \end{matrix} \right)$ critical $2$-cells of the
form $\{ \ast, e_{i}, e_{j} \}$ ($1 \leq i < j \leq m$).

\item We can form the other critical $2$-cells by making a choice of a deleted edge $e_{i}$ ($1 \leq i \leq m$) and a choice of a critical subconfiguration $(B_{i})_{2}[1,1]$ $(1 \leq i \leq m$) or $A_{j}[ \delta_{i} + \delta_{j}]$ ($1 \leq i < j \leq m$).  This makes $m \left( m + \left( \begin{matrix} m \\ 2 \end{matrix} \right) \right)$  possible critical $2$-cells of this kind.
\end{enumerate}
It follows that there are $m^{2} + (m+1) \left( \begin{matrix} m \\ 2 \end{matrix} \right)$ critical $2$-cells in all.

The relators take several different forms.  We simply enumerate the possible cases, describe the relators, and leave the verifications to the interested reader.

\begin{enumerate}
\item $(B_{j})_{2}[1,1] + e_{i}$ ($1 \leq i < j \leq m$):  The relator is:
	$$\smash{[ \mathbf{B}_{j}[1,1], A_{j}[\delta_{i} + 2\delta_{j}]A_{j}[ \delta_{i} + \delta_{j}] \mathbf{e}_{i}	A_{j}[ \delta_{i} + \delta_{j}]^{-1} A_{j}[ \delta_{i} + 2 \delta_{j}]^{-1}]}.$$

\item $(B_{i})_{2}[1,1] + e_{i}$ ($1 \leq i \leq m$):  
	$$\mathbf{e}_{i} \mathbf{B}_{i}[1,1]^{-1}\mathbf{B}_{i}[2,1]\mathbf{e}_{i}^{-1}\mathbf{B}_{i}[1,2]^{-1}.
	$$

\item $(B_{i})_{2}[1,1] + e_{j}$ ($1 \leq i < j \leq m$):
	$$[ \mathbf{e}_{j}, A_{j}[2\delta_{i} + \delta_{j}] \mathbf{B}_{i}[1,1] A_{j}[2 \delta_{i} + \delta_{j}]^{-1}].
	$$

\item $A_{j}[ \delta_{i} + \delta_{j}] + e_{i}$ ($1 \leq i < j \leq m$):
	$$ [ A_{j}[\delta_{i} + \delta_{j}]^{-1}A_{j}[2\delta_{i}+\delta_{j}], \mathbf{e}_{i}].$$

\item $A_{k}[ \delta_{j} + \delta_{k} ] + e_{i}$ ($1 \leq i < j < k \leq m$):
	$$[ A_{k}[\delta_{i} + \delta_{k}]^{-1}A_{j}[\delta_{i} + \delta_{j} + \delta_{k}]^{-1}A_{k}[\delta_{i} + 
	\delta_{j} + \delta_{k}] A_{j}[\delta_{i} + \delta_{j}], \mathbf{e}_{i}].$$

\item $A_{k}[ \delta_{i} + \delta_{k}] + e_{j}$ ($1 \leq i < j < k \leq m$):
	$$[A_{k}[\delta_{j} + \delta_{k}]^{-1}A_{k}[\delta_{i} + \delta_{j} + \delta_{k}], \mathbf{e}_{j}].$$

\item $A_{j}[ \delta_{i} + \delta_{j}] + e_{k}$ ($1 \leq i < j < k \leq m$):
	$$ [\mathbf{e}_{k}, A_{j}[\delta_{i} + \delta_{j} + \delta_{k}]].$$

\item $A_{j}[ \delta_{i} + \delta_{j}] + e_{j}$ ($1 \leq i < j \leq m$):
	$$ [\mathbf{e}_{j}, A_{j}[\delta_{i} + 2\delta_{j}]].$$

\item $\mathbf{e}_{i} + \mathbf{e}_{j}$ ($1 \leq i < j \leq m$):
	$$ [ \mathbf{e}_{j}, A_{j}[\delta_{i} + \delta_{j}]\mathbf{e}_{i}A_{j}[\delta_{i} + \delta_{j}]].$$
\end{enumerate}
Finally, we notice that all of the above relators are commutators, with the exception of those from (2).  We can eliminate each of the latter relations by Tietze transformations, and (at the same time) eliminate the generators $\mathbf{B}_{i}[1,1]$ ($1 \leq i \leq m$).  The resulting presentation for $B_{3}\Gamma$ has $3m + 3\left( \begin{matrix} m \\ 2 \end{matrix} \right) + 2 \left( \begin{matrix} m \\ 3 \end{matrix} \right)$ generators and $m^{2} - m + (m+1) \left( \begin{matrix} m \\ 2 \end{matrix} \right)$ relators, all of which are commutators.

\bibliography{refs-FS2}
\nocite{*}
\bibliographystyle{plain}

\end{document}